\newcommand{\chara}{\mathrm{char}\hspace{1pt}}
\newcommand{\ttop}{\mathrm{top}\hspace{1pt}}
\newcommand{\End}{{\textup{End}}}
\newcommand{\Hom}{\textup{Hom}}
\newcommand{\Ext}{{\textup{Ext}}}
\newcommand{\rad}{{\textup{rad}}}
\newcommand{\proj}{{\sf proj \hspace{.02in} }}
\newcommand{\add}{{\textup{add}\hspace{1pt}}}
\newcommand{\injdim}{{\textup{inj.dim}\hspace{1pt}}}
\newcommand{\projdim}{{\textup{proj.dim}\hspace{1pt}}}
\newcommand{\Aa}{\mathbb{A}}
\newcommand{\tAa}{\tilde{\mathbb{A}}}
\newcommand{\Cc}{\mathcal{C}}
\newcommand{\Db}{\mathcal{D}^b}
\newcommand{\Tt}{\mathcal{T}}
\newcommand{\Z}{\mathbb{Z}}
\newcommand{\xra}{\xrightarrow}
\newcommand{\ra}{\rightarrow}
\newtheorem{teo}{Theorem}[section]
\newtheorem{lema}[teo]{Lemma}
\newtheorem{teorema}[teo]{Theorem}
\newtheorem{corolario}[teo]{Corollary}
\newtheorem{proposicion}[teo]{Proposition}
\newtheorem{ejemplo}[teo]{Example}
\newtheorem{remark}[teo]{Remark}
\newtheorem{question}[teo]{Question}
\newtheorem{definicion}[teo]{Definition}
\newtheorem{teorema*}{Theorem}
\newtheorem{corolario*}{Corollary}
\DeclareMathOperator{\GP}{GP}
\DeclareMathOperator{\fgmod}{mod}
\DeclareMathOperator{\stabGP}{\underline{GP}}
\DeclareMathOperator{\stabfgmod}{\underline{mod}}
\DeclareMathOperator{\stabHom}{\underline{Hom}}
\newcommand{\id}{\mathrm{Id}}
\newcommand{\iso}{\cong}
\newcommand{\kdual}{\mathrm{D}}
\theoremstyle{plain}
\newtheorem{thm}{Theorem}[section]
\newtheorem{prop}[thm]{Proposition}
\newtheorem{cor}[thm]{Corollary}
\newtheorem{lem}[thm]{Lemma}
\theoremstyle{remark}
\begin{document}
\date{\today. Key words: 2-Calabi-Yau tilted algebras, monomial algebras, Jacobian algebras, Gorenstein-projective modules. 2010 Mathematics Subject Classification: 16G20, 16E65, 18E30.}

\title{Monomial Gorenstein algebras and the stably Calabi--Yau property}
\author{Ana Garcia Elsener}

\begin{abstract} 

A celebrated result by Keller--Reiten says that $2$-Calabi--Yau tilted algebras are Gorenstein and stably 3-Calabi--Yau. This note shows that the converse holds in the monomial case: a 1-Gorenstein monomial algebra and stably 3-Calabi--Yau is 2-Calabi--Yau tilted, moreover is Jacobian. We study the case of other stably Calabi--Yau Gorenstein monomial algebras.

\end{abstract}

\maketitle

\section{Introduction}\label{sect 1}

Cluster-tilted algebras are inspired by Fomin--Zelevinsky cluster algebras, a class of commutative algebras that was introduced in \cite{FZ}, also they are related to tilted algebras \cite{HR}. Whereas  tilted algebras are defined as endomorphism algebras of tilting objects over an hereditary algebra, a cluster-tilted algebra is the endomorphism algebra of a cluster-tilting object in a triangulated category called the cluster category of a hereditary algebra. The crucial step to define a cluster algebra, and the way to obtain new cluster-tilted algebras from old is called mutation. The mutation algorithm takes a quiver (or matrix) and transforms it into a new one, and the cluster-tilting mutation process takes a certain finite dimensional  algebra and defines a new one that is closely related to the original. All the algebras given by this operation share interesting homological properties and their module categories are very close to each other. Cluster categories and cluster-tilted algebras were introduced in \cite{BMRRT,CCS,BMR}.

The clear interaction between cluster algebras and cluster tilted algebras has motivated hundreds of works. Some of them are devoted to study other 'cluster like' categories and their cluster-tilting objects. So is the case of 2-Calabi--Yau tilted algebras which are obtained by replacing the cluster category by a 2-Calabi--Yau triangulated category. The definition of 2-Calabi--Yau tilted algebra appeared explicitly in \cite{BIKR}. The reader can also find important references in \cite{R}. One of the main articles on the theory is the one by Keller--Reiten \cite{KR} where it is proved that 2-Calabi--Yau tilted algebras are 1-Iwanaga-Gorenstein and stably 3-Calabi--Yau, i. e. with a 3-Calabi--Yau stable category of Gorenstein projective objects (singularity category).

A large family of 2-Calabi--Yau tilted algebras are Jacobian algebras of a quiver with potential $(Q,W)$. Jacobian algebras are defined by applying cyclic derivatives to a quiver with potential (a generalization called quiver with hyperpotential can be considered \cite{Lad}). As before, new quivers with potential can be obtained from old ones by mutation, see \cite{DWZ}. The 2-Calabi--Yau category that allows us to obtain Jacobian algebras as endomorphism algebras of cluster-tilting objects is called the generalized cluster category $\Cc_{(Q,W)}$ and was introduced by Amiot \cite{Am} building on previous works by Keller and Ginzburg.

On the other hand, in recent years several authors studied Georenstein monomial algebras and their singularity categories \cite{LZ,CSZ}. These articles show that combinatorics behind Gorenstein projective stable categories is nice when we are dealing with 1-Iwanaga--Gorenstein monomial algebras.

Form the mentioned above, we know that Jacobian algebras are 2-Calabi--Yau tilted, and also that 2-Calabi--Yau tilted algebras are 1-Iwanaga-Gorenstein and stably 3-Calabi--Yau. But are these conditions equivalent? 

In the case of monomial algebras, this article uses the combinatorics of Gorenstein projective stable categories to show that the answer is affirmative.


The main result of the article is the following:

\begin{teorema*}[Theorem \ref{main-theorem-1}] Let $k$ be an algebraically closed field of characteristic zero and $\Lambda=kQ/I$ a monomial $k$-algebra. The following are equivalent: 
\begin{enumerate}
\item $\Lambda$ is $2$-Calabi--Yau tilted,
\item $\Lambda$ is a $1$-Iwanaga--Gorenstein algebra and stably $3$-Calabi--Yau,
\item $\Lambda$ is Jacobian.
\end{enumerate}
\end{teorema*}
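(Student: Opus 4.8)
The plan is to prove the cycle of implications $(3) \Rightarrow (1) \Rightarrow (2) \Rightarrow (3)$, so that only the last arrow requires genuinely new work. The implication $(3) \Rightarrow (1)$ is Amiot's theorem \cite{Am}: since $\Lambda = kQ/I$ is finite dimensional, any potential $W$ with $J(Q,W) \iso \Lambda$ has finite-dimensional Jacobian algebra, so the generalized cluster category $\Cc_{(Q,W)}$ is $\Hom$-finite and $2$-Calabi--Yau, and $\Lambda$ is the endomorphism algebra of its canonical cluster-tilting object, hence $2$-Calabi--Yau tilted. The implication $(1) \Rightarrow (2)$ is precisely Keller--Reiten \cite{KR}. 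The algebraically closed, characteristic-zero hypotheses enter exactly here, to guarantee that the quiver-with-potential machinery of \cite{DWZ,Am} and the arguments of \cite{KR} apply.

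The heart of the matter is $(2) \Rightarrow (3)$, and here I would exploit the combinatorial model for the Gorenstein-projective theory of monomial algebras developed in \cite{CSZ,LZ}. First I would recall that for a $1$-Iwanaga--Gorenstein monomial algebra the indecomposable non-projective objects of $\GP \Lambda$ are parametrized by the \emph{perfect paths} of $(Q,I)$, and that the syzygy functor $\Omega$, which is the inverse shift $[-1]$ of the triangulated stable category $\stabGP \Lambda$, acts on this parametrizing set as an explicit bijection $p \mapsto p'$ determined by the relations. The task is then to translate the stably $3$-Calabi--Yau hypothesis, namely the identity $\tau \iso [2]$ relating the Auslander--Reiten translate and the double shift on $\stabGP \Lambda$, into a combinatorial constraint on the perfect-path bijection and on the shape of $(Q,I)$.

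Carrying this out, I expect the $3$-Calabi--Yau condition to force $Q$ to be covered by oriented cycles that are pairwise arrow-disjoint (meeting only at vertices), with $I$ generated by the length-$(\ell-1)$ subpaths of each oriented $\ell$-cycle. Once this normal form is established, I would define the potential $W$ to be the scalar-weighted sum of these oriented cycles and verify directly that the cyclic derivatives $\partial_a W$ recover exactly the chosen monomial generators of $I$; arrow-disjointness of the cycles is what keeps every $\partial_a W$ a single monomial, so that $J(Q,W) = kQ/\langle \partial_a W\rangle = \Lambda$, producing the quiver with potential witnessing $(3)$. The single-cycle case recovers the self-injective Nakayama algebra $kC_\ell/\operatorname{rad}^{\ell-1}$, as expected.

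The main obstacle is the middle step: converting the homological stably $3$-Calabi--Yau condition into this rigid combinatorial normal form. Two points demand care. First, one must show that no arrow can belong to two distinct cycles of the relation data, since a shared arrow would make the corresponding cyclic derivative a sum of at least two paths and destroy monomiality; I expect the comparison of $\tau$ with $[2]$ along the perfect paths running through a shared arrow to be exactly what rules this out. Second, one must confirm that the constructed $W$ is the \emph{correct} potential and not merely one with the right underlying quiver, that is, that $\langle \partial_a W\rangle$ equals $I$ on the nose rather than only up to right-equivalence; this again reduces to the arrow-disjoint cycle description. The remaining verifications, $1$-Gorensteinness and finite-dimensionality of $J(Q,W)$, are then routine given the normal form.
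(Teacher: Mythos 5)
Your global skeleton agrees with the paper's: the cycle $(3)\Rightarrow(1)$ by Amiot, $(1)\Rightarrow(2)$ by Keller--Reiten, with all genuine work in $(2)\Rightarrow(3)$ via the combinatorics of monomial $1$-Iwanaga--Gorenstein algebras. But the normal form you predict in $(2)\Rightarrow(3)$ is wrong, and your argument would collapse exactly there. You claim the stably $3$-Calabi--Yau condition forces $I$ to be generated by the length-$(\ell-1)$ subpaths of arrow-disjoint oriented $\ell$-cycles, so that $W$ is a weighted sum of the cycles themselves, and the single-cycle case is $kC_\ell/\rad^{\ell-1}$. This misses every case in which the relations are \emph{longer} than the cycle they lie on. The paper's Example \ref{ejemplo-1} already contains a counterexample: the loop $\lambda$ with $\lambda^4\in I$ (concretely, $k[\lambda]/(\lambda^4)$). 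This algebra is monomial, self-injective (hence $1$-Iwanaga--Gorenstein), and its singularity category is $\Db(\Aa_3)/\tau$, which is stably $3$-Calabi--Yau because $\tau^{-1}[2]=\tau^{-5}=\mathrm{Id}$ there; it is Jacobian, but the potential term is $\lambda^5$, the \emph{fifth power} of the $1$-cycle, and $I$ is certainly not generated by length-$0$ subpaths. The correct constraint (the paper's Lemma \ref{lema 3-CY}) is $r_i=b_in_i-1$ with $b_i\in\Z^+$ arbitrary, and the potential must be $W=\sum_i c_i^{b_i}$ (Lemma \ref{lema-Jacobian}), powers of cycles rather than cycles; your single-cycle case should yield all Nakayama algebras $kC_n/\rad^{bn-1}$, not only $b=1$. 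Had you attempted to derive your restrictive normal form from $\tau\simeq[2]$, you would have found it is simply false.

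Two further misplacements are symptoms of the same omission. First, arrow-disjointness of the cycles, which you plan to extract from the $3$-Calabi--Yau condition via a perfect-path analysis, is automatic for \emph{any} monomial $1$-Iwanaga--Gorenstein algebra by \cite[Lemma 6.12]{LZ}, as is the organization of $F$ into all subpaths of a common length along cycles; the $3$-CY hypothesis is needed only for the numerical constraint $r_i=b_in_i-1$, which the paper reads off in one line from the orbit-category description $\stabGP(\Lambda)\simeq\coprod_i\Db(\Aa_{r_i-1})/\tau^{n_i}$ of Lemma \ref{lema LZ} together with the relation $\tau^{x+1}=[-2]$ in $\Db(\Aa_x)$. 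Second, you locate the characteristic-zero hypothesis in the outer implications; in the paper it is used precisely in $(2)\Rightarrow(3)$, because $\partial_\alpha(c_j^{b_j})=b_j u$ and one must divide by the coefficient $b_j$ --- a coefficient that never appears if one only ever sees $b_j=1$, which is why your sketch has no need for it.
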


With the consideration of hyperpotentials instead of potentials, the condition $\chara k = 0$ can be removed.

Jacobian algebras include a well known class of gentle 2-Calabi--Yau tilted algebras whose quivers with potential can be obtained from triangulations of unpunctured surfaces, see \cite{ABCP}. This family includes the cluster-tilted algebras of type $\Aa$ and $\tAa$. A related family of gentle algebras, that also have a geometric realization via unpunctured surfaces, are the $m$-cluster-tilted algebras of types $\Aa$ and $\tAa$, see \cite{T,Ba,Gub}. 
This family is a subfamily of gentle algebras defined by a bound quiver that can be realized as a dual quiver obtained from $(m+2)$-angulations as well. In Section 4 we consider a family of algebras arising from surfaces that generalize the class of $m$-cluster tilted algebras of types $\Aa$ and $\tAa$. The main result of this section characterizes their Gorenstein projective modules.  

\begin{teorema*}[Theorem \ref{main}] Let $\Lambda_\Tt = k Q_\Tt /I_\Tt$ be an algebra arising from a $(m+2)$-angulation and $N$ a $\Lambda_\Tt$-module. Then $N$ is Gorenstein projective if and only if $\Omega^{m+1}\tau N \simeq N$ over $\stabfgmod \Lambda_\Tt$. In particular, this holds for $m$-cluster tilted algebras of type $\Aa$ and $\tAa$.
\end{teorema*}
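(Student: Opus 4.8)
The plan is to prove the biconditional characterization of Gorenstein projective modules over an algebra $\Lambda_\Tt$ arising from an $(m+2)$-angulation by analyzing the combinatorial structure of its bound quiver. Since such algebras are gentle (being a dual-quiver realization generalizing $m$-cluster tilted algebras of types $\Aa$ and $\tAa$), the module category admits an explicit description in terms of string and band modules. The first step is to identify which indecomposable modules are Gorenstein projective. For gentle and more generally monomial $1$-Iwanaga--Gorenstein algebras, the combinatorial characterization developed in the earlier references (e.g.\ the work on Gorenstein monomial algebras cited as \cite{LZ,CSZ}) describes the Gorenstein projective indecomposables as those string modules supported on certain special cyclic or ``closed'' paths in the quiver. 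I would begin by recalling this combinatorial criterion explicitly in the setting of $\Lambda_\Tt$.

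\smallskip

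The core of the argument is to compute the syzygy operator $\Omega$ and the Auslander--Reiten translate $\tau$ directly on the relevant string modules, using the gentle-algebra description. For a gentle algebra the minimal projective presentation of a string module, and hence its syzygy, can be read off combinatorially: $\Omega N$ is again a string module whose string is obtained by a prescribed surgery on the string of $N$ determined by the relations of $I_\Tt$. Similarly, $\tau$ on string modules is computed via the known formula relating $\tau$ to the syzygy and the Nakayama functor, or directly via the combinatorial AR-translate for gentle algebras. The key structural input is that an $(m+2)$-angulation imposes that every relation has a controlled length tied to the parameter $m$, so that iterating $\Omega$ a total of $m+1$ times, followed by $\tau$, returns the original string up to the identifications made in the stable category $\stabfgmod \Lambda_\Tt$. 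I would carry out this computation first on the indecomposable Gorenstein projectives to establish the ``only if'' direction: if $N$ is Gorenstein projective then $\Omega^{m+1}\tau N \simeq N$ in $\stabfgmod \Lambda_\Tt$.

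\smallskip

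For the converse direction, the plan is to show that the periodicity condition $\Omega^{m+1}\tau N \simeq N$ forces $N$ to be supported on one of the distinguished cyclic configurations and therefore to be Gorenstein projective. Here I would use the fact that over a $1$-Iwanaga--Gorenstein algebra the Gorenstein projectives are exactly the modules $M$ with $M \cong \Omega^k X$ for some $X$ and all $k$, equivalently the modules lying in the image of arbitrarily high syzygies; a module satisfying an $\Omega$-periodicity (twisted by $\tau$) is automatically a cosyzygy of itself and hence has no projective direct summands and lies in the Gorenstein projective subcategory. The reduction to the distinguished strings comes from the observation that $\Omega$ strictly shortens or shifts non-cyclic strings toward the boundary, so only strings wrapping the special cycles can be periodic under $\Omega^{m+1}\tau$.

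\smallskip

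I expect the main obstacle to be the precise bookkeeping of the combined operator $\Omega^{m+1}\tau$ on strings: one must verify that the length-$m$ relations interact with the $\tau$-translate so that exactly $m+1$ applications of $\Omega$ realign the string, and that the isomorphism holds in the stable category rather than merely up to a shift. This requires carefully tracking which projective summands are killed at each step and confirming that the surgery operations compose to the identity on the relevant periodic strings. Establishing the $m$-dependence of this period — rather than an off-by-one or a different multiple — is the delicate quantitative heart of the proof, and I would isolate it as a separate lemma about the action of $\Omega$ on strings of an $(m+2)$-angulation algebra before assembling the two directions of the equivalence.
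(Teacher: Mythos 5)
Your overall route coincides with the paper's: use Kalck's theorem (Theorem \ref{teorema de kalck}) to identify the indecomposable Gorenstein projectives as the string modules $M(u_i)$ attached to saturated $(m+2)$-cycles, compute $\Omega^{m+1}\tau$ on them explicitly via projective presentations and the Nakayama functor, and obtain the converse for free from the syzygy characterization of Gorenstein projectives. Two corrections to your converse, though: $\Lambda_\Tt$ is $m$-Iwanaga--Gorenstein, not $1$-Iwanaga--Gorenstein (this is Lemma \ref{lema1}, proved from the bound on consecutive zero-relations outside saturated cycles in Proposition \ref{m-angulaciones}); and once you invoke Remark \ref{rema gorenstein} --- any module stably isomorphic to an $m$-th syzygy is Gorenstein projective --- the converse is already finished, so your further ``reduction to distinguished strings'' and the claim that $\Omega$ shortens non-cyclic strings are superfluous (and, for band modules, not something you would want to have to verify directly).

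The genuine gap is in the forward direction, at exactly the point you set aside as bookkeeping. The computation does not simply ``realign the string.'' One finds $\tau M(u_i) = M(v_{i+1})$, and the projective cover of $M(v_{i+1})$ is $P(y_{i+1}) = M(w_{i+1}^{-1} v_{i+1}\alpha_{i+1} u_{i+2})$, so the syzygy \emph{splits}: $\Omega\tau M(u_i) = M(w'_{i+1}) \oplus M(u_{i+2})$. Since $\Omega^t M(u_i) = M(u_{i+t})$ with indices modulo $m+2$, this gives $\Omega^{m+1}\tau M(u_i) \cong \Omega^m M(w'_{i+1}) \oplus M(u_i)$, and the whole theorem reduces to proving $\Omega^m M(w'_{i+1}) = 0$, i.e.\ $\projdim M(w'_{i+1}) \leq m-1$. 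This is not a string-surgery computation: $w'_{i+1}$ is an essentially arbitrary string hanging off the saturated cycle, and bounding its projective dimension needs a homological input. The paper supplies it as Lemma \ref{lemaK}: an indecomposable summand of $\rad P(x)$ over a gentle $d$-Iwanaga--Gorenstein algebra is either Gorenstein projective or has projective dimension at most $d-1$ (the proof uses finiteness of $Q$, gentleness to show the chain of consecutive relations cannot reach a saturated cycle, and Remark \ref{rema gorenstein} to sharpen the bound from $d$ to $d-1$). One must then rule out the Gorenstein projective alternative for $M(w'_{i+1})$ by a separate contradiction argument (cases (2a) and (2b) in the paper, playing the arrows of the saturated cycle against the injectivity of $I(x_{i+1})$ and against $M(w'_{i+1})$ being a summand of $\rad P(y_{i+1})$). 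Without this dichotomy-plus-exclusion step, your plan stalls precisely at what you call its ``delicate quantitative heart.''
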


\section{Preliminaries}\label{sect 2}

Throughout these notes, we consider $k$-algebras where $k$ is an algebraically closed field of characteristic zero (if the contrary is not stated). Let $Q=(Q_0,Q_1)$ be a finite quiver, where $Q_0$ is the set of vertices and $Q_1$ the set of arrows. Let $s,t \colon Q_1 \ra Q_0$ be the functions that indicate the source and the target of each arrow, respectively. We will only consider \emph{finite dimensional basic $k$-algebras}. Every finite dimensional basic $k$-algebra $\Lambda$ is isomorphic to a quotient $kQ/I$, where $I$ is an admissible ideal. The pair $(Q,I)$ is called a \emph{bound quiver} \cite[Chapter III]{ASS}.

Let $\Omega$ be the usual syzygy operator, $\tau$ the Auslander--Reiten (AR) translation, and $D = \Hom_k ( - ,k)$. Denote by $\Db (\Aa_m)$ the derived category of a hereditary algebra of Dynkin type $\Aa_m$.

\begin{definicion}  \normalfont A $k$-algebra $\Lambda$ is \emph{Iwanaga--Gorenstein} if $\injdim \Lambda = \projdim D(\Lambda^{op}) \leq d$ for some non-negative integer $d$. In this case we say that $\Lambda$ is $d$-Iwanaga--Gorenstein.  

\end{definicion}

\begin{remark}\label{rema gorenstein} \normalfont For a $d$-Iwanaga--Gorenstein algebra $\Lambda$, a $\Lambda$-module $M$ is Gorenstein-projective if and only if $M$ is a $d$-th syzygy, see \cite[Proposition 6.20]{Be}. In this case each $\Lambda$-module either has infinite projective dimension or has projective dimension at most $d$. The stable subcategory of Gorenstein-projetive modules, also called singularity category, $\stabGP (\Lambda)$ is triangulated, with shift given by the formal inverse of $\Omega$.   
\end{remark}

An algebra $\Lambda= kQ/I$ is \emph{monomial} if all the generators of $I$ are paths $\alpha_1 \ldots \alpha_t$. Denote by $F$ the minimal set of paths generating $I$. Following \cite[Section 6.2]{LZ}, when $kQ/I$ is monomial and 1-Iwanaga--Gorenstein the elements of $F$ are very particular: $F$ is a disjoint union $\coprod_{i=1}^m F_i$ where the  subset $F_i$ consists of all sub-paths over a cyclic path $c_i = x_1 \xra{\alpha_1}  \cdots \ra x_{n{_i}}\xra{\alpha_{n_i}} x_1 $ of certain common length. The cycles $c_i$ are considered up to cyclic equivalence, and without arrow repetition. Denote by $\Cc(\Lambda)$ the set of cycles $c_1, \ldots, c_m$ that contain the paths in $F_1, \ldots, F_m$ respectively, by $n_i$ the length of the cycle $c_i$ and by $r_i$ the (common) length of paths in $F_i$. We refer to the generators in $F$ as \emph{zero-relations}. By \cite[Lemma 6.12]{LZ}, two different cycles $c_i,c_j$ in $\Cc(\Lambda)$ do not share arrows.

\begin{ejemplo}\label{ejemplo-1} Let $Q$ be the quiver with ideal $I= < \alpha \beta \gamma , \beta \gamma\delta, \gamma \delta \alpha, \delta \alpha \beta , \lambda^4 >$
\hspace{18pt} \begin{tikzcd}
1 \arrow[r, "\alpha"] 
& 2 \arrow[d, "\beta"] \\
4 \arrow[u,"\delta"]
& 3 \arrow[l,"\gamma"] \arrow[loop right,"\lambda"]
\end{tikzcd} 

Then $F = F_1 \coprod F_2$ with $F_1 = \{ \alpha \beta \gamma , \beta \gamma\delta, \gamma \delta \alpha, \delta \alpha \beta \}$, $F_2 = \{ \lambda^4 \}$. We have $c_1 = \alpha \beta \gamma \delta$ and $c_2 = \lambda$, so $n_1 = 4$, $n_2 = 1$, $r_1=3$ and $r_2=4$. 
\end{ejemplo}

\begin{lema}\label{lema LZ} \cite[Theorem 6.16]{LZ} Let $\Lambda=kQ/I$ be a $1$-Iwanaga--Gorenstein monomial algebra. The singularity category $\stabGP(\Lambda)$ is equivalent to the coproduct $\coprod_{c_i\in \Cc(\Lambda)} \Db(\Aa_{r_i -1}) / \tau^{n_i}$. 

\end{lema}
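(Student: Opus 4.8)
The plan is to localise the computation to a single cycle, then identify the resulting block of the singularity category with the stable module category of a self-injective Nakayama algebra, and finally realise that stable category as the orbit category $\Db(\Aa_{r_i-1})/\tau^{n_i}$ via Happel's theorem together with covering theory. \emph{Step 1 (reduction to one cycle).} Since $\Lambda$ is $1$-Iwanaga--Gorenstein, by Remark \ref{rema gorenstein} a module is Gorenstein-projective exactly when it is a first syzygy, and $\stabGP(\Lambda)$ is the stable category of these modules with shift $\Omega^{-1}$. Using the description of $I$ recalled above ($F=\coprod_i F_i$, with $F_i$ the length-$r_i$ subpaths of the cycle $c_i$) together with the fact that distinct cycles share no arrows (\cite[Lemma 6.12]{LZ}), I would show that every indecomposable non-projective Gorenstein-projective module is a uniserial module supported on a single cycle $c_i$, of Loewy length between $1$ and $r_i-1$. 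The crucial point is that an indecomposable syzygy whose support leaves a cycle acquires finite projective dimension and so is not Gorenstein-projective; this is exactly where monomiality and the shape of $F$ are used. As modules attached to different cycles share no composition factors and admit (stably) neither morphisms nor extensions between them, one obtains a coproduct decomposition $\stabGP(\Lambda)\simeq\coprod_i \mathcal{S}_i$, where $\mathcal{S}_i$ is the triangulated subcategory generated by the cycle-$c_i$ modules.

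\emph{Step 2 (a self-injective Nakayama block).} For a fixed cycle $c_i$ of length $n_i$, restricting $F_i$ to the cyclic subquiver produces the self-injective Nakayama algebra $B_i=kC_{n_i}/J^{r_i}$ of Loewy length $r_i$. Its indecomposable non-projective modules are precisely the uniserial modules of length $1,\dots,r_i-1$ found in Step 1, and for such modules the spaces $\Hom_\Lambda(-,-)$, the projective covers and the syzygies all agree with those computed over $B_i$. I would therefore identify $\mathcal{S}_i$ with the stable category $\stabfgmod B_i$, matching the syzygy shifts so that the identification is a triangle equivalence.

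\emph{Step 3 (the orbit category).} Finally I would identify $\stabfgmod B_i$ with $\Db(\Aa_{r_i-1})/\tau^{n_i}$. The universal Galois cover of $B_i$ is the infinite linear Nakayama algebra $\tilde B_i$ of Loewy length $r_i$, with $B_i=\tilde B_i/\langle g\rangle$ for $g$ the translation by $n_i$; and $\tilde B_i$ is exactly the repetitive algebra of the hereditary algebra $k\Aa_{r_i-1}$. Happel's theorem then supplies a triangle equivalence $\stabfgmod \tilde B_i\simeq\Db(\Aa_{r_i-1})$ carrying the stable AR translation to $\tau$, and the push-down formalism for Galois coverings yields $\stabfgmod B_i\simeq \Db(\Aa_{r_i-1})/\langle\tau^{n_i}\rangle$, because the covering automorphism $g$ acts on uniserial modules as the translation $\tau^{n_i}$ (up to the sign of the translation, which does not affect the orbit category). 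Assembling Steps 1--3 gives $\stabGP(\Lambda)\simeq\coprod_{c_i\in\Cc(\Lambda)}\Db(\Aa_{r_i-1})/\tau^{n_i}$.

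The main obstacle is Step 3: upgrading the (essentially combinatorial) match of stable AR quivers — both are the cylinder $\Z\Aa_{r_i-1}/\langle\tau^{n_i}\rangle$ — to a genuine triangle equivalence with the orbit category, and checking that the covering automorphism is precisely $\tau^{n_i}$ with no stray shift $[\,\cdot\,]$. This is where one must invoke the standardness of these self-injective Nakayama algebras (Riedtmann--Asashiba-type results), or Keller's machinery realising such orbit categories as triangulated quotients. A secondary subtlety, in Step 1, is verifying that off-cycle arrows genuinely fail to yield Gorenstein-projective modules, so that the coproduct decomposition is exhaustive.
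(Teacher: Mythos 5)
The paper contains no proof of this lemma: it is quoted directly from \cite[Theorem 6.16]{LZ}, so your proposal can only be compared with the argument in that reference. Your overall architecture --- split $\stabGP(\Lambda)$ into blocks indexed by the cycles $c_i$, identify each block with the stable module category of the self-injective Nakayama algebra $B_i=kC_{n_i}/J^{r_i}$, then pass to $\Db(\Aa_{r_i-1})/\tau^{n_i}$ via Happel's theorem and covering/orbit-category machinery --- is indeed the shape of the argument in the literature, and your Step 3 is standard modulo the references you yourself flag.

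However, Step 1 is false, and the error is fatal to Step 2 as written. Over a $1$-Iwanaga--Gorenstein monomial algebra the indecomposable non-projective Gorenstein-projective modules are \emph{not} uniserial modules supported on a single cycle of Loewy length at most $r_i-1$: they are the submodules $p\Lambda$ of indecomposable projectives generated by the images of the nonzero proper sub-paths $p$ of the cycles (the ``perfect paths'' of \cite{CSZ}, on which Lu--Zhu's proof rests), and such a module carries everything that $p$ can be prolonged to, which typically leaves the cycle. The paper's own Example \ref{ejemplo-1} refutes your claim in both directions. There the submodule $M\subseteq P(3)$ generated by the loop $\lambda$ is a first syzygy (it is the kernel of $P(3)\twoheadrightarrow P(3)/M$, and $P(3)$ is the projective cover of this quotient), hence Gorenstein projective by Remark \ref{rema gorenstein}; yet $M$ is nine-dimensional, has composition factors at vertices other than $3$, is not uniserial, and has Loewy length $5 > r_2-1=3$. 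Conversely, none of the modules your Step 1 predicts for the loop --- the uniserial modules $k[\lambda]/(\lambda^{j})$, $j=1,2,3$, concentrated at vertex $3$ --- is Gorenstein projective, because no indecomposable projective of this algebra has $S_3$ in its socle, so these modules do not even embed into projectives. (The same phenomenon is visible in the paper's gentle case: in Theorem \ref{teorema de kalck} and Figure \ref{kalck1} the strings $u_i$ of the Gorenstein-projective modules dangle off the saturated cycle.) Consequently the Step 2 assertion that Hom spaces, projective covers and syzygies of these modules ``agree with those computed over $B_i$'' cannot hold verbatim: the Gorenstein-projective $\Lambda$-modules are not $B_i$-modules at all, and the correct bijection with non-projective $B_i$-modules changes the underlying module. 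Likewise, modules attached to different cycles can share composition factors (in Example \ref{ejemplo-1} the modules generated by $\gamma\delta$ and by $\lambda$ both contain $S_1$), so the coproduct decomposition cannot be deduced from disjointness of supports. Establishing that this bijection extends to a triangle equivalence --- i.e., computing stable Hom spaces and syzygies of the modules $p\Lambda$ over $\Lambda$ and showing the off-cycle parts do not contribute --- is precisely the nontrivial content of \cite[Theorem 6.16]{LZ}; your proposal assumes it rather than proves it. With the corrected list of objects, your Steps 2 and 3 would then go through along the lines you describe.
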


\subsection{Gentle algebras} We recall the definition of gentle algebra. By \cite{GR} it is known that gentle algebras are Iwanaga--Gorenstein.

\begin{definicion} \normalfont A $k$-algebra $\Lambda = k Q/I$  is \emph{gentle} if 
\begin{enumerate}
\item[(g1)] For each $x_0 \in Q_0$ there are at most two arrows with source $x_0$, and at most two arrows with target $x_0$,
\item[(g2)] the ideal $I$ is generated by paths  of length $2$, 
\item[(g3)] for each $\beta \in Q_1$ there is at most one arrow $\alpha \in Q_1$ and at most one arrow
$\gamma$ such that $\alpha \beta \in I$ and $\beta \gamma \in I$, and
\item[(g4)] for each $\beta \in Q_1$ there is at most one arrow $\alpha$ and at most one arrow
$\gamma$ such that $\alpha \beta \notin I$ and $\beta \gamma \notin I$.
\end{enumerate} 
\end{definicion}

An algebra $\Lambda = kQ/I$,
where $I$ is generated by paths and $(Q, I)$ satisfies the two conditions (g1)
and (g4), is called a \emph{string algebra}. Thus every gentle algebra is
a string algebra. A \emph{string} in $\Lambda$ is by definition a reduced walk $w$ in $Q$ avoiding the
zero-relations, thus $w$ is a sequence $x_1 \overset{\alpha_1}{\longleftrightarrow} x_2 \overset{\alpha_2}{\longleftrightarrow} \cdots \overset{\alpha_n}{\longleftrightarrow} x_{n+1}$ where the $x_i$ are vertices of $Q$ and each $\alpha_i$ is an arrow between the vertices $x_i$ and $x_{i+1}$ in either direction such that there is no $\overset{\beta}{\longrightarrow} \overset{\beta}{\longleftarrow}$, no $\overset{\beta}{\longleftarrow} \overset{\beta}{\longrightarrow} $, and no $\overset{\beta_1}{\longleftarrow} \cdots \overset{\beta_t}{\longleftarrow}$ or $ \overset{\beta_1}{\longrightarrow} \cdots \overset{\beta_t}{\longrightarrow}$ with $\beta_1 \ldots \beta_t \in I$. If the first and the last vertex of $w$ coincide, then the string is cyclic. A \emph{band} is a cyclic string $b$ such that each power $b^n$ is a cyclic string but $b$ is not a power of some string. The classification of indecomposable modules over a string
algebra $\Lambda = kQ/I$ is given by Butler--Ringel \cite{BuRi} in terms of strings and bands in $(Q,I)$. Each string $w$ defines an indecomposable module $M(w)$, called a \emph{string module}, and each band $b$ defines a family of indecomposable modules $M(b,\lambda,n)$, called \emph{band modules}, with parameters $\lambda \in k$ and $n \in \mathbb{N}$.

\begin{definicion}\label{gentle-def}  \normalfont Let $\Lambda = kQ/I$ be a gentle algebra.

\begin{enumerate}

\item[(a)] A cycle $x_1 \xra{\alpha_1}  \cdots \ra x_{n}\xra{\alpha_n} x_1 $ is \emph{saturated} if $\alpha_i \alpha_{i+1} \in I$, for $i \in \mathbb{Z}/ n \mathbb{Z}$. In particular, a \emph{saturated loop} is an arrow $\delta$ such that $s(\delta)=t(\delta)$ and $\delta^2 \in I$. 

\item[(b)] An arrow $\beta $ is \emph{gentle} if there is no arrow $\alpha$ such that $\alpha \beta \in I$.

\item[(c)] A path $\alpha_1 \ldots \alpha_n$ is formed by \emph{consecutive relations} if $\alpha_i \alpha_{i+1} \in I$ for $1 \leq i < n$.

\item[(d)] A path $\alpha_1 \ldots \alpha_n$ is \emph{critical} if it is formed by consecutive relations and $\alpha_1 $ is a gentle arrow.

\end{enumerate}

\end{definicion}

When there is no gentle arrow, we set $n(\Lambda)=0$. When there is a gentle arrow, let $n(\Lambda)$ be the maximal length computed over all critical paths. This number is bounded since $Q$ is finite.

\begin{teorema} \label{lema GR} \cite{GR} Let  $ \Lambda= kQ/I$ be a gentle algebra with $n(\Lambda)$ the maximum
length over all critical paths. Then $\injdim \Lambda = n(\Lambda) =
\projdim D(\Lambda^{op})$ if $n(\Lambda) > 0$, and $\injdim \Lambda = \projdim D(\Lambda^{op}) \leq 1$ if $n(\Lambda) = 0$. In particular, $\Lambda$ is Iwanaga--Gorenstein.

\end{teorema}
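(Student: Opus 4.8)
The equality $\injdim \Lambda = \projdim D(\Lambda^{op})$ is the standard identity for finite-dimensional algebras underlying the notion of Iwanaga--Gorenstein (and recorded in the definition above): the duality $D=\Hom_k(-,k)$ interchanges projective and injective (co)resolutions, so it suffices to compute one of the two quantities. The plan is to compute $\injdim {}_\Lambda\Lambda = \max_{x\in Q_0}\injdim P_x$, the maximum taken over the indecomposable projectives $P_x=\Lambda e_x$, and to match this number with $n(\Lambda)$. The starting point is that over a string algebra every indecomposable projective and every indecomposable injective is a string module: $P_x$ is the string module $M(p_1^{-1}p_2)$, where $p_1,p_2$ are the (at most two) maximal paths starting at $x$ that avoid $I$, and dually $I_x$ is built from the maximal paths ending at $x$.

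The heart of the argument is the combinatorial description, due to Butler--Ringel \cite{BuRi}, of (co)syzygies of string modules: a cosyzygy $\Omega^{-1}M(w)$ is again a direct sum of string modules, obtained by a combinatorial modification of $w$ at its valleys (adding cohooks). I will apply this to build the minimal injective coresolution $0\to P_x\to I^0\to I^1\to\cdots$ step by step. The key observation is that conditions (g3) and (g4) make this process \emph{deterministic}: along each arm of the string there is at most one arrow that can be prepended through a relation, so each cosyzygy step propagates the socle one arrow further backwards along a chain of consecutive relations $\alpha_i\alpha_{i+1}\in I$. Thus the coresolution of $P_x$ traces out, in reverse, the maximal chains of consecutive relations reaching $x$.

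Next I will identify when the coresolution terminates. A cosyzygy becomes injective --- so the coresolution stops --- exactly when the arrow that would be prepended is a \emph{gentle} arrow, i.e.\ when no relation blocks further extension on that side and the current string is already injective. Consequently $\injdim P_x$ is governed by the critical paths ending at $x$, and passing to the maximum over $x\in Q_0$ gives $\max_x\injdim P_x=n(\Lambda)$ whenever a gentle arrow exists. Concretely, the upper bound $\injdim P_x\le n(\Lambda)$ holds because the number of deterministic cosyzygy steps is bounded by the length of the relation-chain reaching $x$, while the matching lower bound is obtained by fixing one maximal critical path $\alpha_1\cdots\alpha_{n(\Lambda)}$ and checking that each of its arrows contributes one genuinely nonzero cosyzygy step in the coresolution of $P_{t(\alpha_{n(\Lambda)})}$.

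Finally I will treat the case $n(\Lambda)=0$, that is, the absence of any gentle arrow: here every arrow is preceded by a relation, so every chain of consecutive relations closes up into a saturated cycle and never reaches a gentle arrow. The cosyzygy computation is then periodic, and one checks directly that each indecomposable projective is either injective or has injective envelope whose cokernel is already injective, giving $\injdim P_x\le 1$ for every $x$; both values $0$ and $1$ genuinely occur. Since $Q$ is finite the quantity $n(\Lambda)$ is finite in all cases, whence $\injdim\Lambda=\projdim D(\Lambda^{op})<\infty$ and $\Lambda$ is Iwanaga--Gorenstein. I expect the main obstacle to lie in the string-module bookkeeping of the middle two steps: proving rigorously that termination of the minimal injective coresolution coincides with reaching a gentle arrow, and that the deterministic cosyzygy steps are in exact length-preserving correspondence with the arrows of a critical path.
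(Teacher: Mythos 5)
Your overall strategy --- computing minimal injective coresolutions of the indecomposable projectives via string combinatorics, with (g3)/(g4) forcing each cosyzygy step to follow a chain of consecutive relations and termination occurring at gentle arrows --- is essentially the argument of Geiss--Reiten; note that the paper itself gives no proof of this theorem, it simply cites \cite{GR}. But your very first step contains a genuine gap. The equality $\injdim \Lambda = \projdim D(\Lambda^{op})$ is \emph{not} a formal identity for finite-dimensional algebras. The duality $D$ turns an injective coresolution of the \emph{left} regular module $\Lambda$ into a projective resolution of the \emph{right} module $D(\Lambda)$; by contrast, $\projdim$ of the \emph{left} module $D(\Lambda^{op})$ computes the injective dimension of $\Lambda$ as a \emph{right} module over itself. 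So the identity you call standard is precisely the assertion that the left and right self-injective dimensions of $\Lambda$ agree: for arbitrary finite-dimensional algebras this is the open Gorenstein-symmetry problem (a theorem of Zaks gives equality only once both sides are known finite), and it is part of what must be proved here. Your proposal computes only the left-hand quantity, so as written it does not establish the displayed equalities.

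The gap is fixable within your framework, but it requires an extra argument: $\Lambda^{op}$ is again gentle, so your computation applies to it and gives $\projdim D(\Lambda^{op}) = n(\Lambda^{op})$; one must then show $n(\Lambda^{op}) = n(\Lambda)$. This follows because a critical path of maximal length cannot be prolonged at its target either (otherwise it would not be maximal), so reversing arrows sets up a length-preserving correspondence between maximal critical paths of $\Lambda$ and of $\Lambda^{op}$; moreover the condition $n(\Lambda)=0$ is self-dual, since by (g3) relation chains cannot branch, so $n(\Lambda)=0$ is equivalent to every arrow lying on a saturated cycle. (Alternatively, establish finiteness of both self-injective dimensions and invoke Zaks.) One further caution: the two ``middle steps'' you defer --- that minimal cosyzygy steps are in exact correspondence with the arrows of relation chains, and that the coresolution terminates exactly upon reaching a gentle arrow --- are not routine bookkeeping; they are the actual content of the Geiss--Reiten proof, so your text is at present a correct plan rather than a complete argument.
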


Let $x_1 \xra{\alpha_1} \cdots \xra{\alpha_{n-1}} x_n \xra{\alpha_n} x_1$ be a saturated cycle. Let $u_i$ be the maximal path avoiding zero-relations starting at the vertex $x_i$ and let  $v_i$ be the the maximal path avoiding zero-relations ending at $x_i$. Then the indecomposable projective module $P(x_i)$ and the indecomposable injective module $I(x_i)$ are string modules given by $P(x_i)=M(u_i^{-1} \alpha_i u_{i+1})$ and $I(x_i)= M(v_{i-1} \alpha_{i-1} v_{i}^{-1})$, see Figure \ref{kalck1}.

\begin{figure}[h!]

\begin{center}
\begin{tikzcd}[column sep=small]
\ \arrow[d,rightsquigarrow,"v_{i}"]& & \  \arrow[d,rightsquigarrow,"v_{i+1}"] & & \ \arrow[d,rightsquigarrow,"v_{i+2}"] \\ x_i \arrow[rr,"\alpha_i"] \arrow[d,rightsquigarrow,"u_i"] & & x_{i+1} \arrow[rr,"\alpha_{i+1}"] \arrow[d,rightsquigarrow,"u_{i+1}"]& & x_{i+2}\arrow[d,rightsquigarrow,"u_{i+2}"] \\
\ & & \ & & \
\end{tikzcd}
\end{center}
\caption{Local situation for a saturated cycle. }\label{kalck1}
\end{figure}

\begin{teorema}\cite[Theorem 2.5]{Ka}\label{teorema de kalck} Let $\Lambda = k Q/I$ be a gentle algebra. Let $x_1 \xra{\alpha_1} \cdots \xra{\alpha_{n-1}} x_n \xra{\alpha_n} x_1$ be a saturated cycle. The string module $M(u_i)$, where $u_i$ is the string starting at $x_i$, is Gorenstein-projective. Moreover, all indecomposable modules in $\stabGP(\Lambda)$ are obtained in such manner. 
\end{teorema}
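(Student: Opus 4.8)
The plan is to split the statement into its two assertions and to handle both through the syzygy calculus for string modules. Since $\Lambda$ is gentle it is Iwanaga--Gorenstein by Theorem \ref{lema GR}; write $d=\injdim\Lambda<\infty$, so that $\Lambda$ is $d$-Iwanaga--Gorenstein. By Remark \ref{rema gorenstein} a module is Gorenstein-projective exactly when it is a $d$-th syzygy. Thus the first assertion amounts to exhibiting each $M(u_i)$ as a $d$-th syzygy, and the second to showing conversely that every indecomposable non-projective $d$-th syzygy is some $M(u_i)$; equivalently, that the indecomposable objects of $\stabGP(\Lambda)$ are precisely the $M(u_i)$.

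First I would compute $\Omega M(u_i)$ directly. Because $u_i$ is a direct path, $M(u_i)$ is uniserial with top $S(x_i)$, so its projective cover is $P(x_i)=M(u_i^{-1}\alpha_i u_{i+1})$, the unique peak of whose defining string sits at $x_i$ (Figure \ref{kalck1}). The surjection $P(x_i)\twoheadrightarrow M(u_i)$ collapses the branch $\alpha_i u_{i+1}$ that issues from the peak along the cycle arrow $\alpha_i$. Since $\alpha_i$ carries the top of $P(x_i)$ into the submodule supported on $u_{i+1}$, and since a dimension count gives $\dim P(x_i)=|u_i|+|u_{i+1}|+2$ against $\dim M(u_i)=|u_i|+1$, the kernel is exactly the string module $M(u_{i+1})$, which is non-projective because $P(x_{i+1})$ carries two branches. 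Hence $\Omega M(u_i)\simeq M(u_{i+1})$, with indices read modulo $n$ around the saturated cycle. Iterating yields $\Omega^{n}M(u_i)\simeq M(u_i)$, so $M(u_i)$ is $\Omega$-periodic and equals $\Omega^{d}M(u_{i-d})$ for every $d$; being a $d$-th syzygy, it is Gorenstein-projective by Remark \ref{rema gorenstein}. This settles the first assertion.

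For the converse I would show that every indecomposable object of $\stabGP(\Lambda)$ is a uniserial string module of the above form. As $\Omega$ induces an autoequivalence of the triangulated category $\stabGP(\Lambda)$, any indecomposable Gorenstein-projective $M$ sits on an $\Omega$-orbit; using the combinatorial description of syzygies of string and band modules over a string algebra (in the spirit of Butler--Ringel \cite{BuRi}), one checks that $\Omega$ sends string modules to string modules, that band modules cannot occur as arbitrarily high syzygies, and that these orbits are finite, so that $M=M(w)$ for a string $w$ and $M$ is $\Omega$-periodic. The syzygy operation replaces $w$ by the complementary arms read off from the projectives at the peaks of $w$, which are governed by the maximal paths avoiding zero-relations; for $M(w)$ to reappear under iterated $\Omega$ the peaks must propagate around a loop of consecutive zero-relations, that is, a saturated cycle, and the only strings stable under this process are the uniserial $u_i$. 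Matching $\Omega$-orbits with saturated cycles then shows that the modules $M(u_i)$ exhaust the indecomposables of $\stabGP(\Lambda)$. The main obstacle is precisely this last step: controlling the syzygy operator on all strings and bands simultaneously and proving that $\Omega$-periodicity forces the support onto a saturated cycle while excluding band modules; by contrast the forward direction reduces to the single clean identity $\Omega M(u_i)\simeq M(u_{i+1})$.
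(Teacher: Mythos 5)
A preliminary remark: the paper itself does not prove this statement — it is imported verbatim from Kalck \cite[Theorem 2.5]{Ka} and used as a black box — so your attempt can only be judged on its own merits and against Kalck's argument, not against an internal proof.

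Your first half is correct and essentially the standard computation. The identification $P(x_i)=M(u_i^{-1}\alpha_i u_{i+1})$, the kernel computation $\Omega M(u_i)\simeq M(u_{i+1})$ (indices modulo $n$), the resulting $\Omega$-periodicity, and the appeal to Remark \ref{rema gorenstein} (every $d$-th syzygy over a $d$-Iwanaga--Gorenstein algebra is Gorenstein-projective, with $d<\infty$ guaranteed by Theorem \ref{lema GR}) together give a complete proof that each $M(u_i)$ is Gorenstein-projective. This is exactly the computation the paper itself performs later, inside the proof of Theorem \ref{main}, where $\Omega^t M(u_i)=M(u_{i+t})$ is used.

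The genuine gap is the ``moreover'' direction, and you have flagged it yourself. Your outline rests on four unproved claims: (a) that $\Omega$ of any string module is a direct sum of uniserial ``path'' modules; (b) that band modules never occur in $\stabGP(\Lambda)$; (c) that the $\Omega$-orbit of an indecomposable Gorenstein-projective module is periodic; and (d) that $\Omega$-periodicity forces the underlying string to be one of the $u_i$ on a saturated cycle. None of these is routine bookkeeping. For (a) one must actually decompose the kernel of a projective cover — kernels of maps onto string modules are specific submodules of direct sums of uniserials, and one has to rule out ``diagonal'' submodules; for (c), finiteness of orbits cannot be deduced from finiteness of strings (a gentle algebra with bands has infinitely many strings), but only from finiteness of direct paths avoiding relations, which requires (a) first; and (d) is precisely the hard content of Kalck's theorem: one must show that when the complementary-arm recursion is iterated, periodicity forces every arrow encountered to be followed by a relation, i.e. forces a saturated cycle, and excludes strings with any ``gentle'' arrow at a peak. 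Writing ``one checks'' for these steps, and then acknowledging that controlling them ``is the main obstacle,'' means the converse inclusion $\ind\stabGP(\Lambda)\subseteq\{M(u_i)\}$ is not proved. As it stands, your proposal establishes only the first assertion of the theorem.
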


\section{Monomial 2-Calabi--Yau tilted algebras are Jacobian}\label{sect-3}

A triangulated $k$-category $\Cc$, $\Hom$-finite with split idempotents, is  \emph{$d$-Calabi-Yau} if there is a bifunctorial isomorphism $\Hom_\Cc (X,Y)\simeq D\Hom_\Cc(Y,X[d])$ for  all $ X,Y\in   \Cc$. This means that the $d$-th power of $[1]$ is a Serre functor for the category, and this is to say that there is a funtorial isomorphism $ \tau^{-1}[d-1] = \mathrm{Id}$.

\begin{definicion} \normalfont Let $\Cc$ be a $2$-Calabi--Yau category \begin{enumerate}
\item An object $T$ is \emph{cluster-tilting} if it is basic and $\add T = \{ X\in \Cc \colon  \Hom_\Cc(X,T[1])=0 \}$. 
\item The endomorphism algebra $\End_\Cc(T)$ of a cluster-tilting object is called a \emph{$2$-Calabi--Yau tilted algebra}. 
\end{enumerate} 
\end{definicion}

Examples of $2$-Calabi--Yau tilted algebras are the \emph{cluster-tilted} algebras defined in \cite{BMR}. A more general family is obtained from generalized cluster categories defined by Amiot, see \cite{Am}. \emph{Quivers with potential} were introduced in \cite{DWZ}. A potential $W$ is a (possibly infinite) linear combination of cycles in
$Q$, up to cyclic equivalence. Given an arrow $\alpha$ and a cycle $c=\alpha_1 \ldots \alpha_l$, the cyclic derivative $\partial_\alpha (c)$ is defined by \[\partial_\alpha (\alpha_1 \ldots \alpha_l)= \sum_{k+1}^{l} \delta_{\alpha \alpha_k} \alpha_{k+1} \ldots \alpha_l \alpha_1 \ldots \alpha_{k-1}, \]
where $\delta_{\alpha \alpha_k}$ is the Kronecker delta. This derivative can be applied to a sum of cycles extending by linearity. Notice that a cycle $\alpha_1 \ldots \alpha_l$ may have arrow repetitions. Let $R \langle\langle Q\rangle\rangle$ be the complete path algebra consisting of all (possibly
infinite) linear combinations of paths in $Q$. Let $(Q,W)$ be a quiver with potential, the \emph{Jacobian algebra} is defined to be $Jac(Q,W)= R\langle\langle Q\rangle\rangle/ \overline{\langle\partial_\alpha W, \alpha \in Q_1\rangle}$. When one considers the algebra $R \langle \langle Q\rangle \rangle / \langle \partial_\alpha W, \alpha \in Q_1\rangle$ and the result is finite dimensional then it is a Jacobian algebra $Jac(Q,W)$ and there is no need to consider completions and ideal closures.

Amiot \cite[Sec. 3]{Am} showed that Jacobian algebras are $2$-CY tilted whenever they are finite dimensional by defining a generalized cluster category $\Cc_{(Q,W)}$. In \cite[Question 2.20]{Am2}, the author asked whether all $2$-Calabi--Yau tilted algebras are  Jacobian, and warned that the answer might be negative in a general setting.  The first part of this note is devoted to this question, provided the algebra is monomial.

Now we recall some main properties of 2-Calabi--Yau tilted algebras. The first part of the next theorem is a celebrated result by Keller--Reiten, the second part is a characterization of Gorenstein projective modules that can be computed over the module category.

\begin{teorema}\label{prop KR - teo ralf} \cite{KR,GS} Let $\Lambda$ be a $2$-Calabi--Yau tilted algebra, 
\begin{enumerate}
\item $\Lambda$ is $1$-Iwanaga--Gorenstein and stably $3$-Calabi--Yau. 
\item A $\Lambda$-module $M$ is Gorenstein projective if and only if $\Omega^2 \tau M \simeq M$ in $\stabfgmod \Lambda$.
\end{enumerate} 
\end{teorema}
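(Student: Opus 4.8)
The plan is to treat the two parts separately, deriving (2) from the structure established in (1).

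For part (1) I would start from the defining data $\Lambda = \End_\Cc(T)$ with $T$ cluster-tilting in the $2$-Calabi--Yau category $\Cc$, and work through the functor $F = \Hom_\Cc(T,-)\colon \Cc \to \mod \Lambda$, which by the cluster-tilting hypothesis induces an equivalence $\Cc/\add(T[1]) \isoto \mod\Lambda$. The Gorenstein bound is obtained by transporting extensions in $\mod\Lambda$ into morphisms in $\Cc$ via $F$: the $2$-Calabi--Yau duality $\Hom_\Cc(X,Y)\iso D\Hom_\Cc(Y,X[2])$ together with the vanishing forced by $\add T = \{X : \Hom_\Cc(X,T[1])=0\}$ makes the relevant $\Ext^2_\Lambda$-groups vanish, yielding $\injdim {}_\Lambda\Lambda = \projdim D(\Lambda^{op}) \leq 1$. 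The stably $3$-Calabi--Yau statement then comes from identifying $\stabGP(\Lambda)$ with a triangulated quotient of $\Cc$ on which the shift is the formal inverse of $\Omega$; transporting the Serre functor $[2]$ of $\Cc$ along this identification exhibits $[3]$ as a Serre functor for $\stabGP(\Lambda)$. This is exactly the Keller--Reiten argument, which I would follow rather than reprove from scratch.

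For part (2), part (1) lets me assume $\Lambda$ is $1$-Iwanaga--Gorenstein and that $\stabGP(\Lambda)$ is $3$-Calabi--Yau with shift $\Omega^{-1}$. In any $d$-Calabi--Yau triangulated category the Auslander--Reiten translation is $\tau_\Delta = [d-1]$, as recorded by the relation $\tau^{-1}[d-1]=\mathrm{Id}$; with $d=3$ and $[1]=\Omega^{-1}$ this gives the triangulated translation $\tau_\Delta = \Omega^{-2}$ on $\stabGP(\Lambda)$. The one genuinely new ingredient is to identify this triangulated translation with the ordinary module-theoretic translation $\tau$ on Gorenstein-projective modules: for $M$ Gorenstein-projective and non-projective the Auslander--Reiten sequence $0\to \tau M \to E \to M \to 0$ in $\mod\Lambda$ has Gorenstein-projective terms (the class is closed under the relevant extensions over a $1$-Iwanaga--Gorenstein algebra, and $\tau M$ remains Gorenstein-projective), hence descends to an Auslander--Reiten triangle in $\stabGP(\Lambda)$, forcing $\tau M \iso \tau_\Delta M$ in $\stabGP(\Lambda)$.

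Granting this identification, the forward implication is immediate: if $M$ is Gorenstein-projective then $\tau M \iso \tau_\Delta M \iso \Omega^{-2} M$ in $\stabGP(\Lambda)$, and applying $\Omega^2$ yields $\Omega^2 \tau M \iso M$ in $\stabfgmod \Lambda$. For the converse I would strip off projective summands at the outset and observe that $\Omega^2 \tau M \iso M$ already exhibits $M$ as stably isomorphic to a second, hence first, syzygy; since $\Lambda$ is $1$-Iwanaga--Gorenstein, Remark \ref{rema gorenstein} characterizes Gorenstein-projectives as first syzygies, so $M$ is Gorenstein-projective. The main obstacle I anticipate is precisely the identification $\tau_\Delta \iso \tau$ on $\stabGP(\Lambda)$: one must verify both that $\tau M$ stays inside the Gorenstein-projective subcategory and that the short exact Auslander--Reiten sequence becomes the Auslander--Reiten triangle without an extra syzygy shift, since an error there would alter the exponent in $\Omega^2\tau M \iso M$. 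Keeping the Calabi--Yau dimension conventions consistent throughout is the delicate bookkeeping.
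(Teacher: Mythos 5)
Your part (1) (deferring to Keller--Reiten, as the paper itself does) and the ``if'' direction of part (2) are fine. The genuine gap is exactly at the step you flagged as the main obstacle and then asserted away: the claim that for a Gorenstein-projective $M$ the Auslander--Reiten sequence $0\to\tau M\to E\to M\to 0$ in $\fgmod\Lambda$ has Gorenstein-projective terms, so that $\tau_\Lambda M\iso\tau_{\GP}M\iso\Omega^{-2}M$ in $\stabGP(\Lambda)$. Gorenstein-projective modules are \emph{not} closed under $\tau_\Lambda$, even over $1$-Iwanaga--Gorenstein algebras, and the module-theoretic translation $\tau_\Lambda$ and the relative translation $\tau_{\GP}$ of the subcategory $\GP(\Lambda)$ genuinely differ on objects (extension-closedness of $\GP(\Lambda)$, which is true, gives no control over $\tau$). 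Concretely, take the cluster-tilted algebra of type $\Aa_4$ with quiver the $3$-cycle $1\xra{\alpha}2\xra{\beta}3\xra{\gamma}1$ together with one arrow $\delta\colon 4\to 1$, bound by $\alpha\beta=\beta\gamma=\gamma\alpha=0$. By Theorem~\ref{teorema de kalck} the non-projective indecomposable Gorenstein-projectives are $S_1,S_2,S_3$, with $\Omega S_1=S_2$, $\Omega S_2=S_3$, $\Omega S_3=S_1$; but $\tau_\Lambda S_3=M(\delta)=\begin{smallmatrix}4\\1\end{smallmatrix}$, which is not Gorenstein projective, whereas $\tau_{\GP}S_3=\Omega^{-2}S_3=S_1$ (the AR sequence of $\GP(\Lambda)$ ending at $S_3$ is $0\to S_1\to P(3)\to S_3\to 0$, while the one in $\fgmod\Lambda$ is $0\to M(\delta)\to I(1)\to S_3\to 0$). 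So $\tau_\Lambda S_3\not\iso\tau_{\GP}S_3$ and your forward implication, as argued, breaks; your final formula survives in this example only because $\Omega^2$ is not injective on objects outside $\stabGP(\Lambda)$: $\Omega^2M(\delta)\iso S_3\iso\Omega^2S_1$. The paper's own proof of Theorem~\ref{main} exhibits the same phenomenon: there $\tau M(u_i)=M(v_{i+1})$ is typically not Gorenstein projective, and a large part of that proof consists precisely in killing the extra summand $\Omega^mM(w'_{i+1})$.

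The paper's appendix repairs exactly this point, and shows what the correct substitute is. Instead of an object-level identification $\tau_\Lambda\iso\tau_{\GP}$ on $\stabGP(\Lambda)$ (false), Theorem~\ref{mainthm} proves the weaker natural isomorphism $\Omega^d\tau_\Lambda\simeq\Omega^d\tau_{\GP}$ for a $d$-Iwanaga--Gorenstein algebra: the two translations agree only after applying $d$ syzygies. Its proof does not pass through AR sequences remaining in $\GP(\Lambda)$ at all; it uses the Yoneda embedding, the adjunction $(\Sigma^d,\Omega^d)$, and the Auslander--Reiten formulas in both $\fgmod\Lambda$ and $\GP(\Lambda)$ to show that both functors corepresent $\kdual\Ext^1_\Lambda(X,\Sigma^d-)$, where Lemma~\ref{sigma-to-GP} guarantees $\Sigma^d$ lands in $\stabGP(\Lambda)$. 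With $d=1$ this gives $\Omega\tau_\Lambda\simeq\Omega\tau_{\GP}$, and then the $3$-Calabi--Yau property --- which, as in your argument, correctly yields $\tau_{\GP}\simeq\Sigma^2$ via uniqueness of Serre functors --- gives $\Omega^2\tau_\Lambda M\iso\Omega^2\tau_{\GP}M\iso\Omega^2\Sigma^2M\iso M$ (Theorem~\ref{ge1}). In short: your outline becomes correct once you replace the identification $\tau_\Lambda\iso\tau_{\GP}$ by the syzygy-level isomorphism $\Omega\tau_\Lambda\simeq\Omega\tau_{\GP}$, which is the real content that must be proved.
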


Let $\Lambda = kQ/I$ be a monomial 2-Calabi--Yau tilted algebra. Since it is a monomial 1-Iwanaga--Gorenstein algebra by the result above, the singularity category $\stabGP(\Lambda)$ is described in Lemma \ref{lema LZ}. Using this, we can gather information on cycles and relations in $(Q,I)$. Remember that the zero-relations of length $r_i$ in $F$ lie in a cycle $c_i$ of length $n_i$.

\begin{lema} \label{lema 3-CY} Let $\Lambda = kQ/I$ be a monomial $1$-Iwanaga--Gorenstein algebra and stably $3$-Calabi--Yau, and $F= \coprod F_i$ the corresponding set of zero-relations. Then $r_i= b_i n_i - 1$, where $b_i \in \Z^+$ is such that $r_i > 0$.

\end{lema}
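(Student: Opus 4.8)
The plan is to combine the two descriptions of the singularity category available to us. By Lemma \ref{lema LZ}, for a $1$-Iwanaga--Gorenstein monomial algebra we have
\[
\stabGP(\Lambda) \simeq \coprod_{c_i \in \Cc(\Lambda)} \Db(\Aa_{r_i-1})/\tau^{n_i},
\]
and each summand is a $\Hom$-finite triangulated orbit category. On the other hand, since $\Lambda$ is assumed stably $3$-Calabi--Yau, the whole category $\stabGP(\Lambda)$ is $3$-Calabi--Yau. The strategy is to translate the stably $3$-Calabi--Yau condition into a numerical constraint on each orbit category $\Db(\Aa_{r_i-1})/\tau^{n_i}$, and then read off the relation $r_i = b_i n_i - 1$ from the known structure of the derived category of type $\Aa_{r_i-1}$.

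First I would recall that the $d$-Calabi--Yau property is equivalent to the functorial identity $\tau^{-1}[d-1] = \id$ stated earlier in the excerpt. For $d = 3$ this reads $\tau^{-1}[2] = \id$, i.e. $[2] = \tau$ as auto-equivalences of $\stabGP(\Lambda)$. Since the equivalence of Lemma \ref{lema LZ} is a coproduct decomposition into orthogonal summands, both $[1]$ and $\tau$ respect each summand, so the identity $[2] = \tau$ must hold separately on each $\Db(\Aa_{r_i-1})/\tau^{n_i}$. Thus the task reduces to a single block: determine when the orbit category $\Db(\Aa_{n})/\tau^{m}$ (writing $n = r_i - 1$, $m = n_i$) is $3$-Calabi--Yau.

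Next I would use the well-understood action of the shift and the AR-translation on $\Db(\Aa_{n})$. For the path algebra of type $\Aa_n$ the Serre functor is $\tau[1]$, and there is a classical relation expressing a power of $\tau$ in terms of a power of the shift; concretely, on $\Db(\Aa_n)$ one has $\tau^{n+1} = [-(n+1)]\cdot(\text{something})$ governed by the Coxeter number $h = n+1$, with the precise identity $\tau^{h} \simeq [-2]$ on the level of the fundamental domain, equivalently $\tau^{n+1}[2] \simeq \id$. Passing to the orbit category $\Db(\Aa_n)/\tau^{m}$ kills $\tau^{m}$, so the relations holding there are generated by $\tau^{m} = \id$ together with the ambient relation of $\Db(\Aa_n)$. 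I would then impose $[2] = \tau$ in the orbit category and compute: substituting $[2]=\tau$ into the ambient identity $\tau^{n+1}=[-2]=\tau^{-1}$ gives $\tau^{n+2}=\id$ in the orbit category, which forces $m \mid (n+2)$, i.e. $n_i \mid (r_i - 1 + 2) = r_i + 1$. Writing $r_i + 1 = b_i n_i$ yields exactly $r_i = b_i n_i - 1$, and the constraint $r_i > 0$ (the paths in $F_i$ are genuine nonzero-length relations) gives $b_i \in \Z^+$ with $r_i>0$ as claimed.

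The main obstacle I anticipate is making the reduction to each orbit block fully rigorous and pinning down the exact identity relating $\tau$ and $[1]$ on $\Db(\Aa_n)/\tau^m$. Two subtleties deserve care: first, that the coproduct decomposition of Lemma \ref{lema LZ} genuinely transports the global $3$-Calabi--Yau structure blockwise, which follows because the summands are $\Hom$-orthogonal so the Serre functor is block-diagonal; second, that I correctly distinguish the relation $[2]=\tau$ from the weaker statement that $\tau^{-1}[2]$ merely acts as the identity on objects rather than functorially. I would use the functorial version, since that is what the Calabi--Yau definition provides, and this is precisely what permits the clean substitution into the Coxeter relation for type $\Aa$. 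Verifying that no \emph{smaller} value of $b_i$ is excluded (i.e. that every valid orbit category is $3$-Calabi--Yau, not just that $3$-CY forces the divisibility) is the remaining bookkeeping, but the divisibility $n_i \mid r_i + 1$ is the heart of the matter.
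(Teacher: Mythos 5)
Your proposal is correct and takes essentially the same route as the paper's own proof: both reduce, via Lemma \ref{lema LZ}, to asking when a single orbit category $\Db(\Aa_{x})/\tau^{n}$ is $3$-Calabi--Yau, and both use the type $\Aa$ relation $\tau^{x+1}\simeq[-2]$ (equivalently $\tau^{-1}[2]=\tau^{-x-2}$) to convert the functorial condition $\tau^{-1}[2]=\id$ into the divisibility $n_i \mid r_i+1$, i.e.\ $r_i=b_in_i-1$. The only cosmetic difference is that you substitute $[2]=\tau$ into the Coxeter relation, whereas the paper computes $\tau^{-1}[2]X=\tau^{-x-2}X$ directly and then forces this power of $\tau$ to be trivial in the orbit category.
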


\begin{proof}
We want to know for which parameters $x,n \in\Z^+$ we have that an orbit category $ \Db(\Aa_x) / \tau^{n}$ is 3-Calabi--Yau. Recall that in the orbit category $[1]$ and $\tau$ are induced from $\Db (\Aa_x)$. 
So, at least, we need the functor $\tau^{-1} [2]$ to send objects of a certain orbit to objects in the same orbit. One can easily compute that $\tau^{-1}[2]X=\tau^{-x-2}X$ over $\Db (\Aa_x)$. Hence $\tau^{x+2} X$ has to be $(\tau^n)^b X = \mathrm{Id} X =X$ in the orbit category, so $x=n  b -2$. If we go back to the notation in Lemma \ref{lema LZ}, the claim follows.
\end{proof}

With the last lemma we have restricted the possible lengths of cycles and zero-relations in $kQ/I$, so now there is a constraint for the elements in $F$. In the following we show that a finite dimensional algebra described by a bound quiver $(Q,I)$ satisfying this constraint has to be 2-Calabi--Yau tilted.

\begin{lema}\label{lema-Jacobian} Let $\Lambda=kQ/I$ be a $1$-Iwanaga--Gorenstein monomial algebra where $F= \coprod_{i \in [1,m]} F_{i}$ is such that $r_i= b_i n_i - 1$, $b_i \in \Z^+$ and $r_i > 0$. Then the zero-relations arise from a potential $W= \sum_{i\in [1,m]}  c_i^{b_i}$.

\end{lema}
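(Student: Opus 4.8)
The plan is to compute the cyclic derivatives $\partial_\alpha W$ explicitly and show that, up to nonzero scalars, they coincide with the zero-relations in $F$, so that $\langle \partial_\alpha W \mid \alpha \in Q_1\rangle = \langle F\rangle = I$. First I would use \cite[Lemma 6.12]{LZ}: since two distinct cycles $c_i,c_j \in \Cc(\Lambda)$ share no arrows, any given arrow $\alpha$ lies on at most one cycle. Hence the only summand of $W=\sum_i c_i^{b_i}$ contributing to $\partial_\alpha W$ is the one indexed by the cycle through $\alpha$; if $\alpha$ lies on no cycle, then $\partial_\alpha W = 0$. This reduces everything to the single computation $\partial_{\alpha_k}(c_i^{b_i})$ with $\alpha_k$ an arrow of $c_i = \alpha_1\cdots\alpha_{n_i}$.

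Next I would carry out that computation. Because $c_i$ has no arrow repetition, the arrow $\alpha_k$ occurs exactly $b_i$ times in the word $c_i^{b_i}$, once in each period. The key point is that, by the $n_i$-periodicity of $c_i^{b_i}$, all $b_i$ occurrences contribute \emph{the same} path in the cyclic derivative, namely the path $p_k$ of length $b_i n_i - 1$ obtained by reading $c_i^{b_i}$ cyclically starting immediately after $\alpha_k$. Thus $\partial_{\alpha_k}(c_i^{b_i}) = b_i\, p_k$, where $p_k$ has length $b_i n_i - 1 = r_i$ and is exactly the sub-path over the cyclic path $c_i$ of length $r_i$ beginning at position $k+1$.

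It then remains to run $\alpha_k$ over all $n_i$ arrows of $c_i$: the associated paths $p_k$ range over all $n_i$ sub-paths over $c_i$ of length $r_i$, and these are precisely the elements of $F_i$. The assignment $\alpha_k \mapsto p_k$ is a bijection onto $F_i$ because the absence of repeated arrows in $c_i$ means the starting arrow determines the sub-path. Since $\chara k = 0$, the coefficient $b_i$ is invertible in $k$, so $\{\partial_\alpha W\}_{\alpha\in Q_1}$ equals $F = \coprod_i F_i$ up to nonzero scalars. Therefore $\langle \partial_\alpha W\rangle = I$; as $\Lambda = kQ/I$ is finite dimensional, there are no completion issues and $\Lambda = Jac(Q,W)$.

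The step requiring the most care is the periodicity argument of the second paragraph: one must verify that the $b_i$ occurrences of $\alpha_k$ produce literally the same path $p_k$ rather than merely $b_i$ paths of equal length, so that the coefficient is exactly $b_i$ and, crucially, no relation of the wrong length is produced. This is the point where both hypotheses are indispensable — the length constraint $r_i = b_i n_i - 1$ guarantees that $p_k$ has the correct length $r_i$, and the absence of arrow repetition in $c_i$ guarantees both that $\alpha_k$ appears exactly $b_i$ times and that the $p_k$ exhaust $F_i$ without collision.
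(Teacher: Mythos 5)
Your proof is correct and follows essentially the same route as the paper's: both reduce to a single cycle via \cite[Lemma 6.12]{LZ}, compute $\partial_{\alpha}(c_j^{b_j}) = b_j u$ with $u$ the length-$r_j$ sub-path over $c_j$, and use $\chara k = 0$ to discard the scalar $b_j$ so the derivatives recover exactly $F$. Your explicit periodicity argument (that all $b_j$ occurrences of $\alpha$ in $c_j^{b_j}$ yield the \emph{same} path, not merely paths of equal length) is a worthwhile detail that the paper leaves implicit.
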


\begin{proof}
Since two different cycles in $\Cc(\Lambda)$ do not share arrows, each $\alpha \in Q_1$ is either in one of the cycles or is not present in $\Cc(\Lambda)$. If $\alpha$ is not in any cycle then the associated partial derivation adds zero to the ideal $I$. If $\alpha$ is in the cycle $c_j \in \Cc(\Lambda)$ of length $n_j$ (with no repetition), then $\partial_\alpha (\sum_i  c_i^{b_i})= \partial_{\alpha} (c_j^{b_j})$ and this is an element of $k Q$ given by $b_j u$  where $u$ is a subpath of length $r_j = n_j b_j - 1$ in the cycle $c_j^{b_j}$ without the arrow $\alpha$. Our general hypothesis is that $k$ is a field of characteristic zero, so these paths $b_j u$ can be replaced by $u$ as generators of $I$. Doing this for all $\alpha \in Q_1$ we cover exactly all the elements in $F$. Therefore $\Lambda$ is a Jacobian algebra hence it is 2-Calabi--Yau tilted.\end{proof}

The two lemmas above sum up in the following theorem.

\begin{teorema}\label{main-theorem-1} Let $k$ be an algebraically closed field of characteristic zero and $\Lambda=kQ/I$ a monomial $k$-algebra. The following are equivalent: 
\begin{enumerate}
\item $\Lambda$ is $2$-Calabi--Yau tilted,
\item $\Lambda$ is a $1$-Iwanaga--Gorenstein algebra and stably $3$-Calabi--Yau,
\item $\Lambda$ is Jacobian.
\end{enumerate}
\end{teorema}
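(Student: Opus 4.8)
The plan is to prove Theorem~\ref{main-theorem-1} by assembling the implications into a cycle, leaning on the two lemmas just established together with the external results quoted earlier. The implication $(1)\Rightarrow(2)$ is immediate from Theorem~\ref{prop KR - teo ralf}(1), the Keller--Reiten result, which applies to any $2$-Calabi--Yau tilted algebra regardless of whether it is monomial. The implication $(3)\Rightarrow(1)$ is also already available: a finite-dimensional Jacobian algebra is $2$-Calabi--Yau tilted by Amiot's construction of the generalized cluster category $\Cc_{(Q,W)}$, as recalled in Section~\ref{sect-3}. So the genuine content, and the only place the monomial hypothesis is used, lives in the implication $(2)\Rightarrow(3)$.

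For $(2)\Rightarrow(3)$ I would proceed in two steps, exactly matching Lemma~\ref{lema 3-CY} and Lemma~\ref{lema-Jacobian}. First, assuming $\Lambda=kQ/I$ is monomial, $1$-Iwanaga--Gorenstein and stably $3$-Calabi--Yau, I invoke the description of the singularity category from Lemma~\ref{lema LZ}, namely $\stabGP(\Lambda)\simeq\coprod_{c_i\in\Cc(\Lambda)}\Db(\Aa_{r_i-1})/\tau^{n_i}$. The stably $3$-Calabi--Yau hypothesis forces each orbit summand $\Db(\Aa_{r_i-1})/\tau^{n_i}$ to be $3$-Calabi--Yau, and Lemma~\ref{lema 3-CY} translates this numerical constraint into $r_i=b_in_i-1$ for positive integers $b_i$. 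Second, with this constraint in hand, Lemma~\ref{lema-Jacobian} produces the potential $W=\sum_{i\in[1,m]}c_i^{b_i}$ and checks that its cyclic derivatives recover exactly the zero-relations in $F=\coprod F_i$, so that $\Lambda=Jac(Q,W)$ is Jacobian. This closes the cycle $(1)\Rightarrow(2)\Rightarrow(3)\Rightarrow(1)$ and yields the equivalence.

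The main obstacle is concentrated inside the passage from the $3$-Calabi--Yau condition on $\stabGP(\Lambda)$ to the arithmetic relation between $r_i$ and $n_i$, i.e.\ in justifying Lemma~\ref{lema 3-CY}. The subtlety is that being stably $3$-Calabi--Yau means $\tau^{-1}[2]=\id$ on the whole triangulated category $\stabGP(\Lambda)$, and one must verify that this global condition can be checked summand-by-summand on the orbit categories and then reduced to the computation $\tau^{-1}[2]X=\tau^{-x-2}X$ in $\Db(\Aa_x)$. Here one needs that $[1]$ and $\tau$ on each orbit category are genuinely induced from $\Db(\Aa_x)$, so that the Serre-functor identity descends to the numerical equation $x+2\equiv 0$ modulo the $\tau^{n}$-orbit, giving $x=nb-2$; this is precisely the content handled in the proof of Lemma~\ref{lema 3-CY}.

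A secondary but real point is the role of the characteristic hypothesis in Lemma~\ref{lema-Jacobian}: the cyclic derivative $\partial_\alpha(c_j^{b_j})$ produces the relation with a scalar coefficient $b_j$, and only because $\chara k=0$ is this coefficient invertible, allowing $b_j u$ to be replaced by $u$ as a generator of $I$. I would flag that this is exactly the step whose removal requires hyperpotentials, as noted after the main theorem in the introduction, and otherwise the proof is a clean concatenation of the two lemmas with the quoted results of Keller--Reiten and Amiot.
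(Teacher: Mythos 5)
Your proposal is correct and takes essentially the same route as the paper's own proof: the cycle $(1)\Rightarrow(2)$ via Theorem~\ref{prop KR - teo ralf}(1), $(2)\Rightarrow(3)$ via Lemmas~\ref{lema 3-CY} and \ref{lema-Jacobian}, and $(3)\Rightarrow(1)$ via Amiot's theorem. Your supplementary remarks on the summand-by-summand reduction of the Calabi--Yau condition and on the role of $\chara k=0$ correctly identify where the real work lies, but that work is carried out in the two lemmas themselves, exactly as in the paper.
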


\begin{proof}
$(1 \Rightarrow 2)$ Let $\Lambda$ be monomial 2-Calabi--Yau tilted. By  Theorem \ref{prop KR - teo ralf} (1) 
$\Lambda$ is a $1$-Iwanaga--Gorenstein algebra with stably $3$-Calabi--Yau singularity category. 

$(2 \Rightarrow 3)$ By Lemmas \ref{lema 3-CY} and \ref{lema-Jacobian}, a 1-Iwanaga--Gorenstein monomial algebra $\Lambda= kQ/I$ with a $3$-Calabi--Yau singularity category is such that the ideal $I$ arises from the cyclic derivatives of a potential, and this potential can be easily defined from $I$. 

$(3 \Rightarrow 1)$ It follows from \cite[Theorem 3.6]{Am} that $\Lambda$ is 2-Calabi--Yau tilted in the sense of Keller--Reiten.\end{proof}

The previous result is written with the restriction: $b_j \in \mathbb{Z}^+$ is non-zero over $k$. To guarantee this we say $k$ is of characteristic zero.   

To solve the problem presented by integration-differentiation of potentials over fields of positive characteristic, an alternative is proposed: using \emph{hyperpotentials}. A hyperpotential on a quiver $Q$ is a collection of elements $(\rho_\alpha)_{\alpha \in Q_1}$ over the complete algebra $R \langle\langle Q\rangle\rangle$ such that: for $\alpha \colon i \to  j$, $\rho_\alpha$ is a (possibly infinite) linear combination of paths $j \leadsto i$, and $\sum_{\alpha \in Q_1} [\alpha, \rho_\alpha] = 0$. The \emph{Jacobian algebra of a hyperpotential} is the quotient $R \langle\langle Q\rangle\rangle / \overline{ \langle \rho_\alpha\rangle } $ (see \cite[Proposition 1]{Lad}). In view of this, if we admit hyperpotentials, the last result extends to algebraically closed fields of positive characteristic.

\begin{remark} \normalfont Let $\Lambda=kQ/I$ a monomial $k$-algebra, where $k$ is a field of arbitrary characteristic. The following are equivalent: 
\begin{enumerate}
\item $\Lambda$ is $2$-Calabi--Yau tilted,
\item $\Lambda$ is a $1$-Iwanaga--Gorenstein algebra and stably $3$-Calabi--Yau,
\item $\Lambda$ is a Jacobian algebra of a hyperpotential.
\end{enumerate}
\end{remark}

\begin{ejemplo} The algebra $kQ/I$ in Example \ref{ejemplo-1} is Jacobian. The potential is $W= \alpha \beta \gamma \delta + \lambda^5$.
\end{ejemplo}

\subsection{Gentle case}\label{subsec-gentle}

A particular case of monomial algebras is gentle algebras, see Definition \ref{gentle-def}. In particular gentle Jacobian algebras were studied in \cite[Section 2]{ABCP}. There it is proved that a Jacobian algebra arising from a triangulation is gentle and 1-Iwanaga--Gorenstein \cite[Theorem 2.7]{ABCP}, and a gentle algebra $kQ/I$, where $Q$ has no loops, such that relations are restricted to saturated 3-cycles arises from a surface triangulation. 

Now we apply the results in Section 3 but adding the gentle hypothesis.  

\begin{proposicion} Let $kQ/I$ be a gentle 2-Calabi--Yau tilted algebra, then every relation lies on a saturated 3-cycle or a saturated loop.
\end{proposicion}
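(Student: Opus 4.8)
The plan is to combine the structure theory for monomial $1$-Iwanaga--Gorenstein algebras (the decomposition $F=\coprod_i F_i$ with cycles $c_i$ of length $n_i$ and relations of length $r_i$) with the gentle hypothesis and the numerical constraint coming from the stably $3$-Calabi--Yau property. Since a gentle $2$-Calabi--Yau tilted algebra is in particular a monomial $1$-Iwanaga--Gorenstein and stably $3$-Calabi--Yau algebra, Lemma \ref{lema 3-CY} applies and forces $r_i = b_i n_i - 1$ with $b_i\in\Z^+$ and $r_i>0$ for each $i$. The first step is to exploit that gentle algebras satisfy condition (g2): the ideal $I$ is generated by paths of length $2$, so every zero-relation has length exactly $2$, i.e. $r_i = 2$ for all $i$.

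Next I would feed $r_i = 2$ into the equation $r_i = b_i n_i - 1$, obtaining $b_i n_i = 3$. Since $b_i, n_i \in \Z^+$, the only factorizations are $(b_i, n_i) = (3,1)$ or $(b_i, n_i) = (1,3)$. The case $n_i = 3$ gives a cycle $c_i$ of length $3$ whose consecutive subpaths of length $r_i = 2$ all lie in $F_i$; by definition of $F_i$ (all subpaths over $c_i$ of the common length) and the gentle convention that cycles are taken without arrow repetition, this says exactly that every product of two consecutive arrows around $c_i$ is a relation, which is precisely a \emph{saturated $3$-cycle} in the sense of Definition \ref{gentle-def}(a). The case $n_i = 1$ gives a cycle of length $1$, i.e. a loop $\delta$, with the subpath of length $r_i = 2$ being $\delta^2 \in I$; this is exactly a \emph{saturated loop}.

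To finish, I would observe that by the structure theorem every generator in $F$ is a subpath over some $c_i\in\Cc(\Lambda)$, so every relation lies on one of these cycles $c_i$, and the dichotomy above shows each such $c_i$ is either a saturated $3$-cycle or a saturated loop. Hence every relation lies on a saturated $3$-cycle or a saturated loop, as claimed. The main obstacle, and the point requiring care, is the bookkeeping at the interface between the two vocabularies: the decomposition $F=\coprod F_i$ from \cite{LZ} stipulates cycles without arrow repetition, while the gentle/saturated language of Definition \ref{gentle-def} allows loops ($\delta^2\in I$ with $s(\delta)=t(\delta)$). I must check that the $n_i=1$ case genuinely corresponds to a loop with $\delta^2\in I$ rather than being excluded by the no-repetition convention, and that the length-$2$ relations of a gentle algebra are consistent with the requirement $r_i>0$ and with each $F_i$ consisting of \emph{all} length-$r_i$ subpaths of $c_i$. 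Once this translation is pinned down, the numerical argument $b_i n_i = 3$ is immediate and the two cases read off directly.
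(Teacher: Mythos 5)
Your proposal is correct and follows essentially the same route as the paper's own proof: gentle implies monomial with $r_i=2$, Theorem \ref{prop KR - teo ralf}(1) makes Lemma \ref{lema 3-CY} applicable, and the equation $b_i n_i = 3$ splits into the saturated $3$-cycle case $(b_i,n_i)=(1,3)$ and the saturated loop case $(b_i,n_i)=(3,1)$. Your extra care in translating between the decomposition $F=\coprod F_i$ of \cite{LZ} and the saturated-cycle vocabulary of Definition \ref{gentle-def} is a sound elaboration of what the paper leaves implicit, not a different argument.
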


\begin{proof}
A gentle algebra is monomial, and by Theorem \ref{prop KR - teo ralf} (1) we can apply Lemma \ref{lema 3-CY}. Then we have $r_i = b_i n_i -1 = 2$, so $b_i n_i = 3$. We have only two options: $b_i = 1$ and $n_i = 3$, or $b_i = 3$ and $n_i = 1$. In the first case the relations arise from a potential term of length three $\alpha \beta \gamma$ and we obtain a saturated 3-cycle, and in the second case the potential term is $\delta^3$ so we get a loop such that $\delta^2 \in I$. 
\end{proof}

We can easily build gentle 2-Calabi--Yau tilted algebras. We only need to be careful so we do not create a cycle that is not in $I$.

\begin{ejemplo}\label{ejemploloop} Let $Q$ be the quiver in Figure \ref{ejemploconloop}, and consider the potential $W=\delta_1^3 + \sum_{i=1}^3 \alpha_i \beta_i \gamma_i$. Then $Jac(Q,W)$ is a gentle $2$-Calabi--Yau tilted algebra.

\begin{figure}[h!]

\begin{tikzcd}
&& 7 \arrow[d,"\beta_3"] 
& & 2 \arrow[ll,"\alpha_3"] \arrow[d,"\alpha_1"] && 6 \arrow[drr,"\gamma_2"] \\
8 \arrow[rr,"\lambda_1"] && 4 \arrow[urr,"\gamma_3"]\arrow[rr,"\lambda_2"]
& & 1\arrow[rr,"\beta_1"] & & 3\arrow[ull,"\gamma_1"] \arrow[u,"\beta_2"]& & 5\arrow[ll,"\alpha_2"]\arrow[loop right,"\delta_1"]{r}\end{tikzcd}
\caption{Gentle bound quiver $(Q,I)$, Example \ref{ejemploloop}}
\label{ejemploconloop}
\end{figure}

\end{ejemplo}


\section{Other monomial algebras and their singularity categories}\label{sect-4}

During this section we study $\stabGP$ for a family of gentle algebras and some examples of non-gentle $m$-cluster tilted algebras. We will obtain an analogous property to Theorem \ref{prop KR - teo ralf} (2) by explicit computation. This property is deeply related to the CY dimension of $\stabGP$.\footnote{This discussion is the main topic of the Appendix. The author is deeply grateful with the Appendix authors for their answer on this subject.}

An \emph{algebra arising from an $(m+2)$-angulation} $\Tt$ is defined as $k Q_\Tt / I_\Tt$  where the bound quiver $(Q_\Tt, I_\Tt)$ is obtained from the $(m+2)$-angultion. The set of vertices $(Q_\Tt)_0$ is indexed by the internal arcs in $\Tt$. For any two vertices $i,j$, there is an arrow $i \to j$ when the corresponding $m$-diagonals $i$ and $j$ share a vertex, they are edges of the same $(m+2)$-gon and $i$ follows $j$ clockwise. Given consecutive arrows $i \xrightarrow{\alpha} j \xrightarrow{\beta} k$, then $\alpha \beta \in I_\Tt$ if and only if $i$, $j$ and $k$ are edges in the same $(m+2)$-gon.

\begin{ejemplo}\label{ejemplorevision1}  \begin{figure}[h!]
\centering
\def\svgwidth{4.5in}
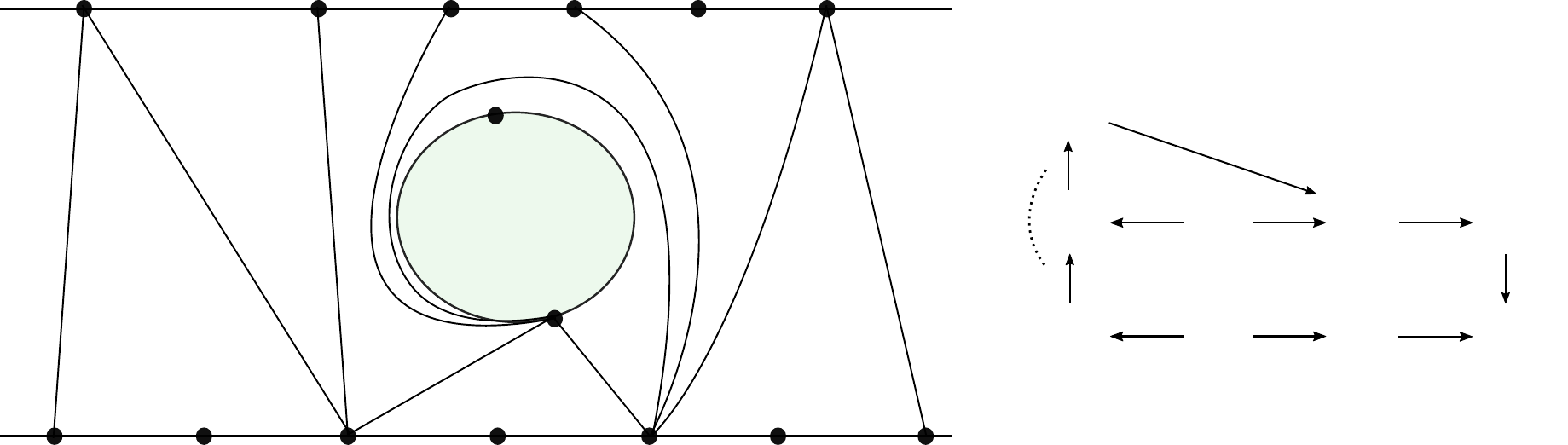
\caption{$4$-angulation of the sphere with 3 three disks removed, and bound quiver $(Q_\Tt, I_\Tt)$.}
\label{m2angulacion}
\end{figure}

Draw an annulus as a strip $\Sigma$ and remove a disk from its interior. Add two marked points in this boundary component and other marked points in the boundary of the strip, see Figure~\ref{m2angulacion}. Given a $4$-angulation of this marked surface, we can obtain a bound quiver $(Q_\Tt, I_\Tt)$.

\end{ejemplo}

In particular, when the surface is a disk (resp. an annulus) the gentle algebra obtained is $m$-cluster tilted of type $\Aa$ (resp. $\tAa$) in the sense of Thomas \cite{T}, as it was studied in several works \cite{CCS,ABCP,Ba,Mu,Tol,Gub}.

The following properties were observed in \cite[Rem. 2.18]{Mu} and \cite[Sec. 7]{Gub} in the case of the disc and annulus, but it is easy to see that they hold when the algebra arises from an $(m+2)$-angulation of a general surface.

\begin{proposicion}\label{m-angulaciones} Let $(Q_\Tt,I_\Tt)$ be a bound quiver arising from an $(m+2)$-angulation.
\begin{enumerate}
\item $\Lambda_\Tt = k Q_\Tt/I_\Tt$ is a gentle algebra.
\item The only possible saturated cycles in $(Q_\Tt,I_\Tt)$ are $(m+2)$-cycles.
\item There can be at most $m - 1$ consecutive zero-relations not lying in a saturated cycle.
\end{enumerate}

\end{proposicion}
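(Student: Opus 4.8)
The plan is to reduce all three statements to a purely local analysis around the $(m+2)$-gons of the angulation $\Tt$, exploiting two features of the construction of $(Q_\Tt,I_\Tt)$: every internal arc (a vertex of $Q_\Tt$) is an edge of at most two $(m+2)$-gons, and an arrow $i \to j$ records precisely that $i$ and $j$ are consecutive edges of a common polygon with $i$ following $j$ clockwise, while a relation $\alpha\beta \in I_\Tt$ records that the two composable arrows $\alpha,\beta$ lie in the \emph{same} polygon. A first useful remark is that a single arrow $j \to k$ determines a unique polygon $P$ (the one having $j,k$ as consecutive edges, meeting at a corner that belongs to exactly one polygon); I will use this repeatedly.

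For part $(1)$ I would verify the four gentle conditions directly. Condition (g2) is immediate, since by definition $I_\Tt$ is generated by the length-two paths $i \to j \to k$ with $i,j,k$ edges of a common $(m+2)$-gon. For (g1), fix an arc $i$; inside each of the at most two polygons containing $i$ as an edge, $i$ has exactly one clockwise-neighbour on each side, hence emits at most one arrow and receives at most one arrow per polygon, giving at most two of each in total. For (g3), fix an arrow $\beta\colon j \to k$, living in its unique polygon $P$: an arrow $\alpha\colon i\to j$ satisfies $\alpha\beta\in I_\Tt$ exactly when $\alpha$ also lies in $P$, and in $P$ the arc $j$ has a unique predecessor edge, so there is at most one such $\alpha$; the symmetric count bounds the $\gamma$ with $\beta\gamma\in I_\Tt$. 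Finally (g4) follows from the complementary count: the arrows into $j$ (resp.\ out of $k$) that are \emph{not} in a relation with $\beta$ are exactly those lying in the other polygon sharing $j$ (resp.\ $k$), of which there is at most one.

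For part $(2)$ the key observation is that consecutive relations force all the involved arcs into one polygon. If $x_1 \xrightarrow{\alpha_1} \cdots \xrightarrow{\alpha_n} x_1$ is a saturated cycle in the sense of Definition~\ref{gentle-def}(a), then each composite $\alpha_t\alpha_{t+1}\in I_\Tt$ places $x_{t},x_{t+1},x_{t+2}$ in a common polygon; since the arrow $\alpha_{t+1}$ already determines its polygon uniquely, an immediate induction shows that all the $x_i$ are edges of a single $(m+2)$-gon $P$. Closing up the cycle (forced by the saturation relation $\alpha_n\alpha_1\in I_\Tt$) requires the walk to traverse every edge of $P$ and return, so all $m+2$ edges of $P$ must be internal arcs and the cycle visits exactly them; hence it is an $(m+2)$-cycle. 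The same induction underlies part $(3)$: a maximal path of consecutive zero-relations (Definition~\ref{gentle-def}(c)) lies inside one polygon $P$ and runs along a maximal block of consecutive internal-arc edges of $P$. If this path does not close into a saturated cycle, then by part $(2)$ the block is not all of the boundary of $P$, so at least one edge of $P$ is a boundary segment and the block contains at most $m+1$ internal arcs. A block of $s$ consecutive internal arcs produces $s-1$ arrows and therefore $s-2$ composable relations, so the number of consecutive zero-relations is at most $(m+1)-2 = m-1$.

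The step I expect to be the genuine obstacle is not the counting but the verification that the clean local picture \emph{really} holds on a general marked surface: I must check that the degenerate configurations — an arc bounding a self-folded region, or the same arc occurring twice on the boundary of one $(m+2)$-gon — do not violate ``each arrow has a unique polygon'' or ``each arc has a unique neighbour on each side.'' Ruling these out (or checking that the arrow/relation counts survive them) is where the real care lies; once the local model around each polygon is confirmed to be the one described above, the three claims follow from the elementary combinatorics just sketched, exactly as observed for the disc and annulus in \cite{Mu,Gub}.
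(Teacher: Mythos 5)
Your proposal is correct in substance, but it is worth noting that it does not parallel a proof in the paper, because the paper gives none: Proposition~\ref{m-angulaciones} is justified there only by citing the observations of \cite{Mu} and \cite{Gub} for the disc and the annulus, together with the assertion that the general case ``is easy to see''. What you supply is precisely the missing local argument: arrows of $Q_\Tt$ correspond to corners of $(m+2)$-gons joining two internal arcs (oriented clockwise), relations correspond to pairs of such corners in one and the same polygon, each arc lies on at most two polygons, and each arrow determines its polygon. From this the gentle conditions (g1)--(g4) follow by counting neighbours inside and outside the polygon of a given arrow, and your induction ``consecutive relations stay in one polygon'' gives both (2) and (3); the count in (3) is right and matches how the paper uses it in Lemma~\ref{lema1} (a block of at most $m+1$ consecutive internal arcs yields at most $m$ arrows, hence at most $m-1$ consecutive relations, so a critical path has length at most $m$).

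The one point to press on is the caveat you flag at the end, since it is a genuine issue --- though it is an imprecision of the paper's setup as much as of your proof. The statement (2) can actually fail for a sufficiently liberal notion of ``$(m+2)$-angulation'': if a polygon were glued to itself along two \emph{adjacent} copies of the same arc, the construction would produce a loop $\alpha$ with $\alpha^2\in I_\Tt$, i.e.\ a saturated $1$-cycle, contradicting (2). So one must either build non-degeneracy into the definition or rule such configurations out. For the surfaces the paper intends (unpunctured, all marked points on the boundary, no monogons or digons among the regions) this can be done: a corner joining two ends of the same arc forces either a monogon or an interior marked point, so quiver loops cannot occur; and an arc appearing twice on the boundary of one polygon can only do so non-adjacently, in which case no arrow or relation ties the two copies together and your local counting survives (at worst a saturated $(m+2)$-cycle revisits a vertex, which is harmless for (1)--(3)). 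Spelling this out would turn your sketch into a complete proof and would in fact sharpen the paper, which nowhere states the hypotheses on the surface needed to exclude these degenerations.
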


Immediately, we have the following observation. 

\begin{lema}\label{lema1} Let $\Lambda_\Tt = k Q_\Tt/I_\Tt$ be an algebra arising from a $(m+2)$-angulation. Then, $\Lambda_\Tt$ is $m$-Iwanaga--Gorenstein.
\end{lema}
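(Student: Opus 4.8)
The plan is to reduce the claim to the invariant $n(\Lambda_\Tt)$ appearing in Theorem~\ref{lema GR} and to bound it using the combinatorics of Proposition~\ref{m-angulaciones}. Since $\Lambda_\Tt$ is gentle by Proposition~\ref{m-angulaciones}(1), Theorem~\ref{lema GR} applies and yields $\injdim \Lambda_\Tt = \projdim D(\Lambda_\Tt^{op}) = n(\Lambda_\Tt)$ when $n(\Lambda_\Tt)>0$, and $\injdim \Lambda_\Tt = \projdim D(\Lambda_\Tt^{op}) \leq 1$ otherwise. As $m \geq 1$, in either case it is enough to prove the bound $n(\Lambda_\Tt) \leq m$, i.e.\ that every critical path has length at most $m$.

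First I would show that no critical path meets a saturated cycle. Recall that a critical path $\alpha_1 \cdots \alpha_n$ is formed by consecutive relations, with $\alpha_1$ a gentle arrow. Suppose for contradiction that some $\alpha_j$ lies on a saturated cycle $\gamma_1 \to \cdots \to \gamma_{m+2} \to \gamma_1$, say $\alpha_j = \gamma_1$. Then $j>1$, since $\alpha_1$ is gentle whereas every arrow of a saturated cycle is preceded by a relation. The relation $\alpha_{j-1}\alpha_j \in I_\Tt$, the cyclic relation $\gamma_{m+2}\gamma_1 \in I_\Tt$, and the gentle condition (g3) (at most one arrow $\alpha$ with $\alpha \gamma_1 \in I_\Tt$) force $\alpha_{j-1} = \gamma_{m+2}$, which again lies on the cycle. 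Iterating backwards places all of $\alpha_1, \ldots, \alpha_j$ on the saturated cycle; in particular $\alpha_1$ is preceded by a relation of the cycle, contradicting that $\alpha_1$ is gentle. Hence critical paths avoid saturated cycles entirely.

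Granting this, every critical path is a path of consecutive zero-relations none of which lies on a saturated cycle. By Proposition~\ref{m-angulaciones}(3) such a chain consists of at most $m-1$ consecutive relations $\alpha_i \alpha_{i+1}\in I_\Tt$, so a critical path involves at most $m-1$ relations and therefore has length at most $m$. This gives $n(\Lambda_\Tt)\leq m$, and Theorem~\ref{lema GR} then shows $\injdim \Lambda_\Tt = \projdim D(\Lambda_\Tt^{op}) \leq m$, so $\Lambda_\Tt$ is $m$-Iwanaga--Gorenstein.

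I expect the main obstacle to be the argument in the second paragraph, namely verifying rigorously that a critical path cannot slip into a saturated $(m+2)$-cycle. The delicate point is the backward propagation using condition (g3): one must confirm that at each step the unique incoming relation is precisely the one coming from the cycle, and that the first arrow being gentle is genuinely incompatible with lying on a saturated cycle. Once this is settled, the length bound is an immediate consequence of the counting in Proposition~\ref{m-angulaciones}(3).
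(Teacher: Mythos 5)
Your proof is correct and follows essentially the same route as the paper: gentleness plus Theorem~\ref{lema GR}, the observation that critical paths avoid saturated cycles, and the length bound from Proposition~\ref{m-angulaciones}(3). The only differences are refinements rather than a new approach: you supply the (g3) backward-propagation argument justifying the step that the paper merely asserts (that a critical path starting at a gentle arrow cannot meet a saturated cycle), and your uniform counting covers $m=1$ as well, which the paper handles separately via Theorem~\ref{prop KR - teo ralf}(1).
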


\begin{proof}

The case $m=1$ follows from Theorem \ref{prop KR - teo ralf} (1), and also from \cite[Lemma 2.6]{ABCP}. Let $m\geq 2$. Since $\Lambda_\Tt$ is gentle, we can apply Theorem \ref{lema GR}. First assume that there is no gentle arrow in $(Q_\Tt,I_\Tt)$, then $n(\Lambda_\Tt)=0$, so $d$ is zero or one and $d \leq m$. The statement follows.
\\
Now, assume there are gentle arrows in $(Q_\Tt,I_\Tt)$, and let $\alpha_1$ be one of them. It follows that $\alpha_1$ is not part of a saturated cycle. Let $\alpha_1 \ldots \alpha_r$ be a critical path. Since $\alpha_1$ is not part of a saturated cycle, then none of the arrows $\alpha_i$ for $1 \leq i \leq r$ is part of a saturated cycle. By Proposition \ref{m-angulaciones} (3), the maximal number of consecutive zero-relations outside of a saturated cycle is $m-1$. Therefore, $r \leq m$, and by Theorem \ref{lema GR}, $\Lambda_\Tt$ is Gorenstein of dimension $d \leq m$.\end{proof}

Most of the arguments in the following lemma can be found also in \cite[Section 4]{Ka}.

\begin{lema}\label{lemaK} Let $\Lambda =kQ/I$ be a gentle $d$-Iwanaga--Gorenstein algebra, $d \geq 1$. Let $x\in Q_0$, and let $N$ be an indecomposable direct summand of $\rad P(x)$. Then,
\begin{enumerate}
\item[(a)] $N \in \stabGP(\Lambda)$, or
\item[(b)] $\projdim N \leq d-1$.
\end{enumerate}

\end{lema}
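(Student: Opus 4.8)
The plan is to first make $N$ explicit as a string module and then split the argument according to whether $\projdim N$ is finite or infinite; the former is formal, while the latter is where the gentle combinatorics of Kalck enter. Since $\Lambda$ is gentle, $P(x)$ is the string module attached to the walk built from the (at most two) maximal directed paths avoiding zero-relations issuing from $x$, as recorded in Figure \ref{kalck1}. Hence $\rad P(x)$ is a direct sum of at most two indecomposable directed string modules, and I may write $N = M(u)$ where $u$ is a maximal path avoiding zero-relations starting at the vertex $y = t(\alpha)$ for an arrow $\alpha \colon x \to y$. I would also record at the outset the identity $\rad P(x) = \Omega S(x)$, coming from the projective cover $P(x) \twoheadrightarrow S(x)$; this is the only structural input needed for the bound in (b).

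Next I would dispose of the case $\projdim N < \infty$, which produces (b) and is entirely formal. Writing $\rad P(x) = N \oplus N'$ and applying $\Omega^{d-1}$, the module $\Omega^{d-1} N$ is a direct summand of $\Omega^{d-1}(\rad P(x))$, which equals $\Omega^{d} S(x)$ up to projective summands. As $\Omega^{d} S(x)$ is a $d$-th syzygy, it is Gorenstein projective by Remark \ref{rema gorenstein}, and hence so is its summand $\Omega^{d-1} N$. A Gorenstein projective module of finite projective dimension is projective, so $\Omega^{d-1} N$ is projective and therefore $\projdim N \leq d-1$. I emphasise that this step uses neither the gentle hypothesis nor the precise shape of $N$, only that $N$ is a direct summand of a first syzygy.

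It remains to treat $\projdim N = \infty$ and show $N \in \stabGP(\Lambda)$, which is case (a); this is where the gentle structure is essential and is the main obstacle. By Theorem \ref{teorema de kalck} the indecomposable objects of $\stabGP(\Lambda)$ are exactly the maximal directed string modules $M(u_i)$ attached to the vertices of the saturated cycles of $(Q,I)$, so it suffices to force the starting vertex $y$ of $N = M(u)$ onto a saturated cycle. The idea is to compute syzygies of maximal directed string modules explicitly, in the spirit of Kalck's Section 4: the projective cover of $M(u)$ is $P(y)$, and $\Omega M(u)$ is read off as the complementary arm of $P(y)$, which is again a maximal directed string module $M(u')$ starting at a new vertex $y'$ (or is zero, when $P(y)$ has a single arm and $M(u)$ is projective). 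Iterating produces a sequence of directed string modules drawn from the finite list indexed by the vertices of $Q$.

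The hard part is precisely the resulting termination/periodicity analysis: one must verify that the combinatorial syzygy step on maximal directed strings strictly decreases an appropriate length away from the saturated cycles, so that an infinite syzygy chain can only be supported on the cyclic part of $(Q,I)$. Granting this, $\projdim N = \infty$ forces $y$ onto a saturated cycle, whence $N$ is Gorenstein projective by Theorem \ref{teorema de kalck}. Assembling the cases — $N$ projective (so $\projdim N = 0 \leq d-1$, using $d \geq 1$), $\projdim N$ finite and positive, and $\projdim N$ infinite — then yields the stated dichotomy.
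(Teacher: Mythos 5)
Your treatment of case (b) is correct, and it is genuinely different from (and more general than) the paper's argument: you deduce $\projdim N \leq d-1$ from the facts that $\Omega^{d-1}N$ is a direct summand of the $d$-th syzygy $\Omega^{d}S(x)$, that Gorenstein projectives are closed under direct summands, and that a Gorenstein projective of finite projective dimension is projective, whereas the paper writes $N=\Omega M(\beta w)$ for an explicit quotient $M(\beta w)$ of $P(x)$ and excludes $\projdim N = d$ via Remark~\ref{rema gorenstein}. (The two closure facts you invoke are standard but not stated in the paper, so they should be cited, e.g.\ to \cite{Be}.) The problem is the other case, which is the heart of the lemma, and there your proposal has a genuine gap that you acknowledge yourself: the assertion introduced by ``Granting this'' is exactly the nontrivial content, and it is left unproved. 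Moreover, the mechanism you propose for it (a length that ``strictly decreases away from the saturated cycles'') is not how the argument works. The paper's argument is: each syzygy step produces a new arrow $\delta_{i+1}$ with $\delta_i\delta_{i+1}\in I$, so an infinite syzygy chain yields an infinite path $\alpha\delta_1\delta_2\cdots$ all of whose length-two subpaths lie in $I$; finiteness of $Q_1$ forces an arrow to repeat, which closes up a saturated cycle; and then gentleness condition (g3) --- each arrow $\delta$ admits at most one arrow $\epsilon$ with $\epsilon\delta\in I$ --- propagates membership in that saturated cycle \emph{backwards} along the path, step by step, until it reaches $\alpha$ itself. This backwards propagation is the key idea missing from your sketch, and without it the statement ``an infinite syzygy chain is eventually supported on the cyclic part'' is useless: over a $d$-Iwanaga--Gorenstein algebra $\Omega^{d}M$ is Gorenstein projective for \emph{every} module $M$, so knowing that some iterated syzygy of $N$ lands on a saturated cycle says nothing about $N$.

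There is also a misstatement in your reduction: it does \emph{not} suffice to force the top vertex $y$ of $N=M(u)$ onto a saturated cycle. Theorem~\ref{teorema de kalck} identifies the indecomposable Gorenstein projectives with the particular arms $M(u_i)$ hanging under the incoming cycle arrows, and no string $u_i$ can begin with a cycle arrow (since $\alpha_{i-1}u_i$ avoids relations while $\alpha_{i-1}\alpha_i\in I$). Hence a maximal directed string of the form $\alpha_i u_{i+1}$, which does begin at the saturated-cycle vertex $x_i$, gives a module that is \emph{not} Gorenstein projective; such modules arise as radical summands whenever there is a non-cycle arrow $\alpha\to x_i$ with $\alpha\alpha_i\notin I$. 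The correct target --- and what the pigeonhole-plus-propagation argument actually delivers --- is that the arrow $\alpha$ over $N$ (the arrow with $N$ a summand of $\rad P(s(\alpha))$) lies on a saturated cycle; then $N$ is literally one of Kalck's modules $M(u_i)$, and case (a) follows. Your outline for case (a) therefore needs both the corrected target statement and the completed termination argument.
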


\begin{proof} If $N$ is projective, we are in case (b). Let $N$ be non projective. Let  $P(x)=M(u^{-1} \alpha^{-1} \beta w)$ be the indecomposable projective and $N=M(u)$ so that $S(t(\alpha))= \ttop M(u)$. We study the cases:

\begin{enumerate}
\item[(i)] $\alpha $ is part of a saturated cycle $x_1 \ra \cdots \ra x_i \xra{\alpha} x_{i+1} \cdots \ra  x_1$.
\item[(ii)] $\alpha$ is not part of a saturated cycle.
\end{enumerate}
(i) Let $x_i\xra{\alpha} x_{i+1}$, then $M(u)$ is a direct summand of $\rad P(x_{i})$. By Theorem \ref{teorema de kalck}, $M(u)\in \stabGP (\Lambda)$.
\\
(ii) Since $N=M(u)$ is not projective, there exists an arrow $\delta_1 $ such that $\alpha \delta_1 \in I$. The arrow $\delta_1$ is not part of a saturated cycle, since then $\alpha$ would be part of the saturated cycle.
Let $P(t(\alpha))= M(c^{-1} \delta_1^{-1} u)$, then there is an exact sequence
\begin{equation}\label{secdelta1}
0 \ra M(c) \ra P(t(\alpha)) \ra M(u) \ra 0.
\end{equation}
If the string module $M(c)$ is not projective, then it satisfies the same conditions as $M(u)$, so we can construct a new exact sequence

\begin{equation}\label{secdelta2}
0 \ra M(c_1) \ra P(t(\delta_1)) \ra M(c) \ra 0.
\end{equation}
Recursively, we obtain a path $\alpha \delta_1 \cdots \delta_n$ such that each quadratic factor belongs to $I$.
This process has to finish after a finite number of steps, being the direct summand $M(c_n)$ of $P(t(\delta_{n-1}))$ a projective module. If there were not finite steps and $M(c_n)$ was not projective, we would find new arrows $\delta_{n+1}, \ldots$ and form a path $\alpha \delta_1 \cdots \delta_n \cdots$ such that each quadratic factor is in $I$. The quiver $Q$ is finite, so the only way to construct an infinite path $\alpha \delta_1 \cdots \delta_n \cdots$ is by reaching a saturated cycle. By the gentleness, if one of the arrows $\delta_i$ is in a saturated cycle, then all $\alpha,\delta_1, \ldots , \delta_n$ are in the saturated cycle, contradicting the condition imposed on $\alpha$. Therefore the procedure to find the short exact sequences stops. Splicing the short exact sequences we get a projective resolution for $M(u)$, which is finite, so $\projdim M(u) < \infty$. By Remark \ref{rema gorenstein}, we have $\projdim M(u) \leq d$. Now, we can also express $M(u)$ as $M(u)=\Omega M(\beta w)$. If we had $\projdim M(u)=d$, then we would have $\projdim M(\beta w)=d+1$ and this is impossible by Remark \ref{rema gorenstein}. Thus, $\projdim M(u)\leq d-1$. \end{proof}

To complete the previous lemma, observe that if $\Lambda$ is selfinjective (that is, $\Lambda$ is $0$-Iwanaga--Gorenstein) then every indecomposable module is Gorenstein-projective.

\begin{teorema}\label{main} Let $\Lambda_\Tt = k Q_\Tt /I_\Tt$ be an algebra arising from a $(m+2)$-angulation and let $N$ be a $\Lambda_\Tt$-module. Then $N$ is Gorenstein projective if and only if $\Omega^{m+1}\tau N \simeq N$ in $\stabfgmod \Lambda_\Tt$. In particular, this holds for $m$-cluster tilted algebras of type $\Aa$ and $\tAa$.
\end{teorema}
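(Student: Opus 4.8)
The plan is to prove both implications by reducing to the indecomposable Gorenstein-projective summands and computing the two operators $\Omega$ and $\tau$ explicitly on them.

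First, the implication ``$\Omega^{m+1}\tau N \simeq N \Rightarrow N$ Gorenstein projective'' is essentially formal. By Lemma \ref{lema1}, $\Lambda_\Tt$ is $m$-Iwanaga--Gorenstein, so by Remark \ref{rema gorenstein} a module is Gorenstein projective precisely when it is an $m$-th syzygy. Since $\Omega^{m+1}\tau N = \Omega^{m}(\Omega\tau N)$ is an $m$-th syzygy, it is Gorenstein projective for \emph{every} module $N$. Hence if $\Omega^{m+1}\tau N\simeq N$ in $\stabfgmod$, then $N$ coincides with a Gorenstein-projective module after deleting projective summands, so $N$ itself is Gorenstein projective.

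For the converse I would argue on indecomposables, since $\Omega$, $\tau$ and stable isomorphism all respect direct sums and projective summands vanish in $\stabfgmod$. A projective $N$ satisfies $\tau N = 0 \simeq N$ trivially, so I may assume $N$ indecomposable non-projective and Gorenstein projective. By Theorem \ref{teorema de kalck} together with Proposition \ref{m-angulaciones}(2), $N = M(u_i)$ for a vertex $x_i$ lying on a saturated $(m+2)$-cycle $x_1 \xra{\alpha_1}\cdots\xra{\alpha_{m+2}} x_1$, where $u_i$ is the maximal path avoiding relations at $x_i$. Using $P(x_i) = M(u_i^{-1}\alpha_i u_{i+1})$, the projective cover $P(x_i)\twoheadrightarrow M(u_i)$ collapses the $u_i$-arm and its kernel is exactly the submodule generated by $\alpha_i$, namely $M(u_{i+1})$. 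Thus $\Omega M(u_i) = M(u_{i+1})$, so $\Omega$ acts on the $(m+2)$ Gorenstein-projective modules attached to the cycle as the cyclic rotation $i \mapsto i+1$; in particular $\Omega^{m+2}M(u_i)\simeq M(u_i)$.

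It remains to compute $\tau M(u_i)$. The minimal projective presentation of $M(u_i)$ is $P(x_{i+1})\xra{\cdot\alpha_i} P(x_i)\to M(u_i)\to 0$, and I would feed this into $\tau = D\Tr$: dualizing yields a presentation of $\Tr M(u_i)$ over $\Lambda_\Tt^{op}$, whose reversed saturated cycle produces, after applying $D$, the string module $M(u_{i+1})$. Equivalently, one computes the almost-split sequence ending at $M(u_i)$ by the Butler--Ringel rules (as in \cite[Section 4]{Ka}), tracking the hooks and cohooks along the arm $u_i$ and the at-most $m-1$ consecutive relations outside the cycle allowed by Proposition \ref{m-angulaciones}(3); the outcome is $\tau M(u_i)\simeq M(u_{i+1})$, so $\tau$ realizes the same rotation $i\mapsto i+1$. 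This is consistent with the case $m=1$: there Theorem \ref{prop KR - teo ralf}(2) gives $\Omega^2\tau M(u_i)\simeq M(u_i)$, which forces exactly $\tau\simeq\Omega$ once $\Omega$ is the rotation by $+1$ of order $3$. Combining the two computations with the periodicity,
\[
\Omega^{m+1}\tau M(u_i)\simeq \Omega^{m+1}M(u_{i+1})\simeq M(u_{i+m+2})=M(u_i),
\]
which finishes the only-if direction.

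The main obstacle is the explicit determination of $\tau M(u_i)$: unlike the syzygy, which is read off instantly from the shape of $P(x_i)$, the Auslander--Reiten translate requires controlling the Butler--Ringel hook/cohook surgery along the maximal paths $u_i$, which in general leave the saturated cycle and may run through up to $m-1$ consecutive relations. The content of the theorem is precisely that this surgery realizes the same rotation $i\mapsto i+1$ as $\Omega$, so that the composite $\Omega^{m+1}\tau$ becomes the full rotation $\Omega^{m+2}$, hence the identity on $\stabGP(\Lambda_\Tt)$; here Lemma \ref{lemaK} is what guarantees that no spurious summands of finite projective dimension intervene in the syzygy and translate computations.
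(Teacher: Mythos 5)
Your reduction to indecomposables and your ``if'' direction are fine (the latter is the formal half: $\Omega^{m+1}\tau N$ is an $m$-th syzygy, hence Gorenstein projective by Lemma \ref{lema1} and Remark \ref{rema gorenstein}, and stable isomorphism passes this to $N$). The fatal gap is the claim $\tau M(u_i)\simeq M(u_{i+1})$, which is false in general. Computing $\tau$ from the minimal projective presentation $P(x_{i+1})\to P(x_i)\to M(u_i)\to 0$ via the Nakayama functor gives $\tau M(u_i)\simeq M(v_{i+1})$, where $v_{i+1}$ is the maximal relation-avoiding path \emph{ending} at $x_{i+1}$: the translate lives on the injective side of the cycle and is in general not Gorenstein projective at all, so it cannot be the rotation $M(u_{i+1})$. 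Concrete counterexample with $m=1$: the cluster-tilted algebra of type $\Aa_4$ with saturated $3$-cycle $1\xra{\alpha_1}2\xra{\alpha_2}3\xra{\alpha_3}1$ and one extra arrow $\beta\colon 2\to 4$. Here $M(u_1)=S(1)$ and $M(u_2)=M(\beta)=\begin{smallmatrix}2\\4\end{smallmatrix}$, but $\tau S(1)=S(2)$, since any nonzero map $\nu p_1\colon I(2)=\begin{smallmatrix}1\\2\end{smallmatrix}\to I(1)=\begin{smallmatrix}3\\1\end{smallmatrix}$ has image $S(1)$ and kernel $S(2)$; and $S(2)\not\simeq\begin{smallmatrix}2\\4\end{smallmatrix}$ in $\stabfgmod\Lambda$. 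Your ``consistency check'' for $m=1$ is also invalid: from $\Omega^2\tau M\simeq M$ and $\Omega^3 M\simeq M$ one cannot conclude $\tau M\simeq\Omega M$, because $\Omega$ is not injective on objects of $\stabfgmod\Lambda$ (it is invertible only after restriction to $\stabGP$); indeed in the example above $\Omega^2$ sends both $S(2)$ and $\begin{smallmatrix}2\\4\end{smallmatrix}$ to $S(1)$.

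Consequently the real content of the theorem is exactly the step your proposal skips. One must show $\Omega^{m+1}M(v_{i+1})\simeq M(u_i)$: taking the projective cover $P(y_{i+1})$ of $M(v_{i+1})$ yields $\Omega M(v_{i+1})=M(w'_{i+1})\oplus M(u_{i+2})$, hence
\[
\Omega^{m+1}\tau M(u_i)\;\simeq\;\Omega^{m}M(w'_{i+1})\,\oplus\, M(u_i),
\]
and everything reduces to proving that $\Omega^{m}M(w'_{i+1})=0$ in $\stabfgmod\Lambda_\Tt$. By Lemma \ref{lemaK} either $\projdim M(w'_{i+1})\leq m-1$ (which gives the vanishing) or $M(w'_{i+1})$ is a nonprojective Gorenstein projective module; the second possibility must then be excluded by a separate combinatorial argument, using gentleness, Theorem \ref{teorema de kalck} and the structure of saturated $(m+2)$-cycles, which ends in a contradiction in both possible configurations of the arrow hitting $\ttop M(w'_{i+1})$. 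Your final sentence gestures at Lemma \ref{lemaK} as controlling ``spurious summands,'' but invoking it is not enough: one has to actually rule out its Gorenstein-projective alternative, and that exclusion, not the rotation formula for $\tau$, is the heart of the proof.
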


\begin{proof}
Let $M$ be an indecomposable module in $\stabGP(\Lambda_\Tt)$. By Theorem \ref{teorema de kalck}, $M=M(u_i)$ where $u_i$ is the maximal non-zero path starting at $x_i$.

\begin{center}

\begin{tikzcd}[column sep=small]
\ \arrow[d,rightsquigarrow,"v_{i}"]& & \  \arrow[d,rightsquigarrow,"v_{i+1}"] & & \ \arrow[d,rightsquigarrow,"v_{i+2}"] \\ x_i \arrow[rr,"\alpha_i"] \arrow[d,rightsquigarrow,"u_i"] & & x_{i+1} \arrow[rr,"\alpha_{i+1}"] \arrow[d,rightsquigarrow,"u_{i+1}"]& & x_{i+2}\arrow[d,rightsquigarrow,"u_{i+2}"] \\
\ & & \ & & \
\end{tikzcd}
\end{center}
We compute a minimal projective presentation of $M(u_i)$.
\[\xymatrix@C8pt@R8pt{&M( u_{i+1}^{-1} \alpha_{i+1} u_{i+2}) \ar[rr]^{p_1}\ar@{->>}[rd] && M( u_i^{-1} \alpha_i u_{i+1})
\ar[rr]&& M(u_i) \ar[rr]&&0.\\ M(u_{i+2}) \ar@{^{(}->}[ru]&&M(u_{i+1}) \ar@{^{(}->}[ru]}\]
Observe that $\Omega^t M(u_i) = M( u_{i+t})$, where $t$ is an integer considered modulo $m+2$. Applying the Nakayama functor we get

\begin{displaymath}
	\xymatrix  @R=0.6cm  @C=0.6cm {
		0\ar[r] & M(v_{i+1})\ar@{^(->}[r] & M(v_i \alpha_i  v_{i+1}^{-1} )\ar[rr]^{\nu p_1}\ar@{^(->}[rd] && M(  v_{i-1}\alpha_{i-1} v_i^{-1} )\ar@{>>}[r]&M(v_{i-1}).\\
		&&& M(v_i)\ar@{>>}[ru] &&}
\end{displaymath}
Then $ \tau M(u_i) = \ker \nu p_1 = M(v_{i+1})$. Let $S(y_{i+1})= \ttop M(v_{i+1})$. Let $P(y_{i+1})= M(w^{-1}_{i+1} v_{i+1} \alpha_{i+1} u_{i+2})$ be the projective cover of $M(v_{i+1})$. 

\begin{center}

\begin{tikzcd}[column sep=small]
& & y_{i+1}  \arrow[d,rightsquigarrow,"v_{i+1}"]\arrow[rr,rightsquigarrow,"w_{i+1}"] & & \ \\
x_i \arrow[rr,"\alpha_i"] & & x_{i+1} \arrow[rr,"\alpha_{i+1}"] & & x_{i+2}\arrow[d,rightsquigarrow,"u_{i+2}"] \\
& & & & \
\end{tikzcd}
\end{center}
Therefore, $\Omega M(v_{i+1}) = M(w'_{i+1}) \oplus M(u_{i+2})$, where $M(w'_{i+1})$ is the maximal submodule of $M(w_ {i+1})$. The syzygy functor is additive, so
\begin{equation*}
\Omega^{m+1} \tau M(u_i) = \Omega^{m+1} M (v_{i+1}) = \Omega^m M(w'_{i+1}) \oplus \Omega^m M(u_{i+2}).
\end{equation*}
Since $\Omega^t M(u_i) = M( u_{i+t}) $, we have
\begin{equation*}
\Omega^m M(w'_{i+1}) \oplus \Omega^m M(u_{i+2}) = \Omega^m M(w'_{i+1}) \oplus M(u_i).
\end{equation*}
Now, we only need to prove that $\Omega^m M(w'_{i+1})=0$. Observe that $ M(w'_{i+1})$ is a direct summand of $\rad P(y_{i+1})$. 

\smallskip

We know by Lemma \ref{lema1} that $\Lambda_\Tt$ is Gorenstein of dimension $d \leq m$. By Lemma \ref{lemaK} one of the following holds: 
\begin{enumerate}
\item $\projdim M(w'_{i+1}) \leq m-1$, or
\item $M(w'_{i+1}) \in \stabGP(\Lambda_\Tt)$. 
\end{enumerate}

If (1) holds, then $\Omega^m M(w'_{i+1})=0$ and we are done.

\smallskip

We assume (2) holds, so $M(w'_{i+1}) \in \stabGP(\Lambda_\Tt)$. We prove that this leads to a contradiction.  Let $z_{i+1}$ be the vertex such that $ \ttop M( w'_{i+1}) =S (z_{i+1})$. By the description in Theorem \ref{teorema de kalck}, the vertex $z_{i+1}$ is the target of an arrow $\gamma$ in a saturated $(m+2)$-cycle and $\gamma w'_{i+1}\neq 0$. 

\smallskip

(2a) If the arrow $\gamma$ is $ y_{i+1} \xra{\gamma} z_{i+1}$, as appears in the figure below, then there is an arrow $a_j$ in the saturated $(m+2)$-cycle, such that  $a_j \gamma \in I_{\Tt}$. Then, $a_j v_{i+1} \neq 0$, contradicting that $I(x_{i+1}) = M( v_i \alpha_i  v_{i+1}^{-1} )$ is the indecomposable injective associated to $x_i$. This is absurd.

\begin{center}

\begin{tikzcd}[column sep=small]
\ \arrow[d,"a_{j}"]& & \   & & \  \\ y_{i+1} \arrow[rr,"\gamma"] \arrow[d,rightsquigarrow,"v_{i+1}"] & & z_{i+1} \arrow[d,rightsquigarrow,"w'_{i+1}"] \arrow[rr,"a_{j+2}"]& & \  \\
\ & & \ & & \
\end{tikzcd}
\end{center}

(2b) If the arrow $\gamma$ in a saturated cycle is such that $s(\gamma)\neq y_{i+1}$, as we see in the figure below, there is an arrow $b_{j+2}$ following the saturated cycle such that $\gamma b_{j+2} \in I_\Tt$. Thus, we have $\gamma w'_{i+1} \neq 0$ and by gentleness, $a_ {j+1}b_{j+2}\notin I_\Tt$.  

\begin{center}

\begin{tikzcd}[column sep=small]
\ \arrow[d,dashrightarrow,"a_j"] & & \ \arrow[d,"\gamma"]  & & \  \\ y_{i+1} \arrow[rr,"a_{j+1}"] \arrow[d,rightsquigarrow,"v_{i+1}"] & & z_{i+1} \arrow[rr,"b_{j+2}"] \arrow[d,rightsquigarrow,"w'_{i+1}"] & & \  \\
\ & & \ & & \
\end{tikzcd}
\end{center}

But recall that $M(w'_{i+1})$ is a submodule of $\rad P(y_{i+1})$, so $b_{j+2}$ has to be the first arrow in the string $w'_{i+1}$. This contradicts that $M(w'_{i+1})$ is a submodule of $\rad P(y_{i+1})$. To sum up, $M(w'_{i+1})$ is not a trivial Gorenstein projective module and just $(1)$ holds. \end{proof}


As a corollary, we obtain the next result that generalizes the properties known for cluster-tilted algebras: Theorem \ref{prop KR - teo ralf} (1) and (2).

\begin{corolario}\label{teo m tipo a} Let $\Lambda$ be a $m$-cluster tilted algebra of type $\Aa$ or $\tAa$. Then
\begin{enumerate}
\item $\Lambda$ is $m$-Iwanaga--Gorenstein, and
\item $N \in \GP(\Lambda)$ if and only if $\Omega^{m+1} \tau N \cong N$.
\end{enumerate}
\end{corolario}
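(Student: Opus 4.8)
The plan is to observe that this statement is an immediate consequence of the results already established for the larger class of algebras arising from $(m+2)$-angulations. First I would recall, from the discussion following Example~\ref{ejemplorevision1}, that an $m$-cluster tilted algebra of type $\Aa$ (respectively $\tAa$) is precisely the gentle algebra $\Lambda_\Tt = kQ_\Tt/I_\Tt$ attached to an $(m+2)$-angulation of a disk (respectively an annulus), in the sense of Thomas~\cite{T}. Thus such a $\Lambda$ falls squarely into the class to which Lemma~\ref{lema1} and Theorem~\ref{main} apply, and the corollary amounts to reading off their conclusions in these two special cases.

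With this identification in hand, part (1) is exactly Lemma~\ref{lema1}, which asserts that any algebra arising from an $(m+2)$-angulation is $m$-Iwanaga--Gorenstein; specializing the surface to the disk or the annulus gives the claim for types $\Aa$ and $\tAa$. Part (2) is the corresponding specialization of Theorem~\ref{main}: a $\Lambda_\Tt$-module $N$ is Gorenstein projective if and only if $\Omega^{m+1}\tau N \simeq N$ in $\stabfgmod\Lambda_\Tt$. Since Gorenstein projectivity is detected in the stable category, and both $\Omega$ and $\tau$ are already taken modulo projectives, the isomorphism $\Omega^{m+1}\tau N \cong N$ recorded in the corollary is literally the stable isomorphism furnished by Theorem~\ref{main}.

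I expect essentially no obstacle to overcome here: all the homological content has already been carried out in Proposition~\ref{m-angulaciones} (gentleness, the shape of saturated cycles, and the bound on consecutive zero-relations), in Lemma~\ref{lema1} (the Gorenstein dimension), and in Theorem~\ref{main} (the syzygy characterization). The only point requiring care is the geometric dictionary, namely verifying that Thomas's combinatorial model of the $m$-cluster tilted algebras of types $\Aa$ and $\tAa$ coincides with the bound quiver $(Q_\Tt, I_\Tt)$ built from an $(m+2)$-angulation of the disk and the annulus. This identification is standard and is already recorded in the cited literature \cite{T,CCS,ABCP,Ba,Mu,Tol,Gub}, so it can be invoked rather than reproved.
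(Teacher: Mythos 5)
Your proposal is correct and follows exactly the paper's own argument: part (1) is read off from Lemma~\ref{lema1} and part (2) from Theorem~\ref{main}, using the identification (already stated in Section~\ref{sect-4}) of $m$-cluster tilted algebras of types $\Aa$ and $\tAa$ with algebras arising from $(m+2)$-angulations of the disk and annulus. The extra care you give to that geometric dictionary is a reasonable elaboration but does not change the route.
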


\begin{proof}
Part (1) follows from Lemma \ref{lema1}. Part (2) follows from Theorem \ref{main}. 
\end{proof}

From a more general point of view, $m$-cluster tilted algebras are $(m+1)$-Calabi--Yau tilted algebras. It is known, and expected, that Corollary \ref{teo m tipo a} does not hold in general for $d$-Calabi--Yau tilted algebras by the next reasons:

\begin{enumerate}
\item In \cite[Section 5.3]{KR} there is an example (due to Iyama) of a $d$-Calabi--Yau tilted algebra that \emph{is not} Iwanaga--Gorenstein.

\item Moreover, a recent preprint \cite{Lad2} shows that all finite dimensional $k$-algebras are $d$-Calabi--Yau tilted (for all $d > 2$).
\end{enumerate}

Still, there are results in this direction due to Keller and Reiten \cite[Section 4.6]{KR2}, and Beligiannis \cite[Theorem 6.4]{Be2}, showing that a $d$-Calabi--Yau tilted algebra is Iwanaga--Gorenstein under certain conditions. In both cases, it is required that $\add T$ is corigid to some degree $u$, meaning that $\Hom_\Cc(T,T[-t])=0$ for all $1\leq t\leq u$. Over $m$-cluster tilting categories of type $\Aa$ or $\tAa$, the full subcategory defined by a cluster tilting object $\add T$ is $(m+1)$-cluster tilting. In the next example we show that Corollary \ref{teo m tipo a} is independent of these results, by giving an example of a non-corigid cluster-tilting object in the $2$-cluster category of type $\Aa_4$.

\begin{ejemplo}\label{ejemploA} Let $\Cc^2_Q$ be the $2$-cluster category, where $Q$ is of type $\Aa_4$ and let $T$ be as in Figure \ref{A2}. The object $T$ is not corigid since $\Hom(T_3,T_1[-1]) \simeq \Hom (T_3[1],T_1) \neq 0$. By Corollary \ref{teo m tipo a} the $2$-cluster tilted algebra $\End(T)$ is $2$-Iwanaga--Gorenstein. In fact, in this example the algebra, given by the quiver below bound by $\beta \alpha=0$, is of global dimension two.

\begin{figure}[h]
\centering
\def\svgwidth{4in}
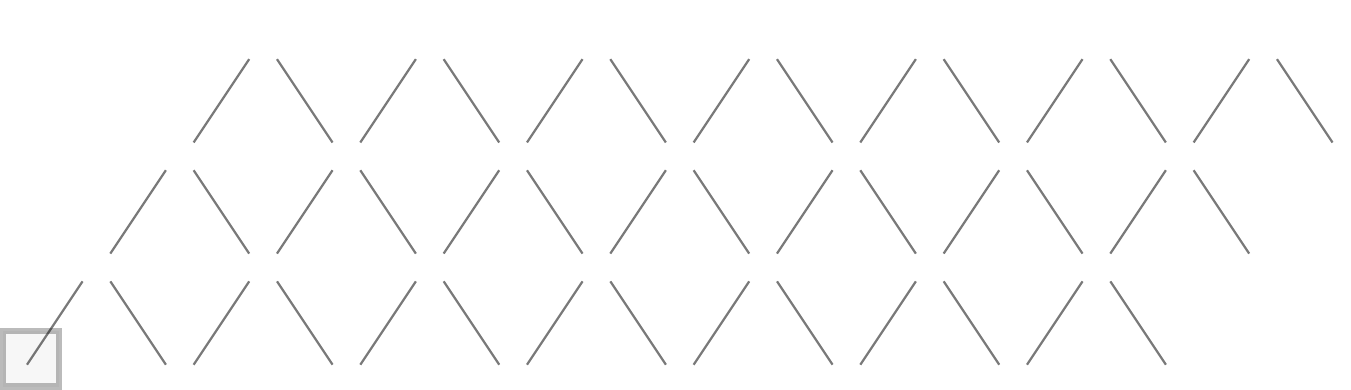
\caption{A $2$-cluster tilting object $T$ which is not corigid.}
\label{A2}
\end{figure}

\begin{figure}[h!]

\begin{tikzcd}
1 & 2\arrow[l,"\alpha"] & 3 \arrow[l,"\beta"] & 4\arrow[l]
\end{tikzcd} 
\end{figure}

\end{ejemplo}

In the next example we see that there are $m$-cluster-tilted algebras not arising from $(m+2)$-angulations, defined by non-corigid cluster tilting objects, for which the conclusion of Corollary \ref{teo m tipo a} holds.

\begin{ejemplo}\label{ejemploD} Let $\Cc^2_Q$ be the $2$-cluster category of type $\mathbb{D}_6$, and $T=\bigoplus_{i=1}^6 T_i$ the $2$-cluster tilting object in Figure \ref{D2}. The algebra $\Lambda=\End_{\Cc^2_Q}(T)$ in Example \ref{ejemploD} is given by the quiver in Figure \ref{Q ejemplo} and the ideal $I= \langle \lambda \alpha, \alpha \beta \gamma, \beta \gamma \delta, \delta \lambda  \rangle$.  We see that $\Lambda$ is $2$-Iwanaga--Gorenstein and has infinite global dimension. The indecomposable modules in $\stabGP(\Lambda)$ are $3,6,\begin{smallmatrix}5\\4\end{smallmatrix},\begin{smallmatrix}2\\1\end{smallmatrix}$, exactly those such that $\Omega^{3} \tau N = N$.

\begin{figure}[h]
\centering
\def\svgwidth{5.8in}
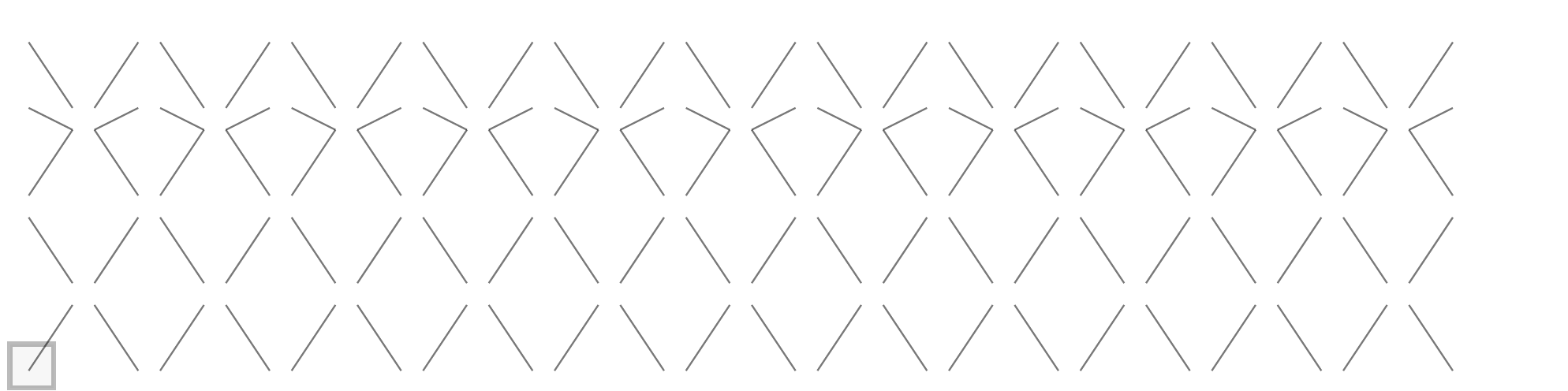
\caption{A $2$-cluster tilting object $T$ such that $\Hom (T_5, T_3 [-1])=\Hom(T_5[1],T_3) \neq 0$, so $T$ is not corigid.}
\label{D2}
\end{figure}
\end{ejemplo}

\begin{figure}[h!]

\begin{tikzcd}[column sep=.2in, row sep=.18in]
1 && 2 \arrow[ll,"\epsilon"] \arrow[d,"\lambda"] && 3 \arrow[ll,"\delta"] && 4 \arrow[ll, "\gamma"] \\
&& 6 \arrow[rr,"\alpha"] && 5 \arrow[urr,"\beta"']
\end{tikzcd}
\caption{ $\Lambda = kQ/I$, Example \ref{ejemploD}}
\label{Q ejemplo}
\end{figure}

As a final remark, we may ask the next question:

\begin{question} $m$-cluster tilted algebras are still a special subclass of $(m+1)$-Calabi--Yau tilted algebras. Are they always Iwanaga--Gorenstein? Does Corollary \ref{teo m tipo a} always hold for these algebras?
\end{question}

\textbf{Acknowledgments:} This work was supported by the Austrian Science Fund Project Number P25647-
N26. The author wants to thank the researchers at Universit{\"a}t Stuttgart,  especially Prof. Steffen Koenig and Dr. Matthew Pressland, and also would like to thank the organizers of BIREP-2017 Summer School on Gentle Algebras.

{}

\bigskip{\footnotesize%

  \textsc{Institut f{\"u}r Mathematik und
Wissenschaftliches Rechnen, Universit{\"a}t Graz, Heinrichstra{\ss}e 36, A-8010 Graz, Austria} \par
  \textit{E-mail address:} \texttt{ana.garcia-elsener@uni-graz.at} 
}

\clearpage
\appendix
\section{Auslander--Reiten translations for Gorenstein algebras\\\footnotesize{by Sondre Kvamme and Matthew Pressland}}

Let $\Lambda$ be an Iwanaga--Gorenstein algebra. Since $\GP(\Lambda)$ is a functorially finite subcategory of $\fgmod{\Lambda}$, it has Auslander--Reiten sequences \cite{A}, inducing an Auslander--Reiten translation $\tau_{\GP}$ on $\stabGP(\Lambda)$, typically different from the Auslander--Reiten translation $\tau_\Lambda$ on $\stabfgmod{\Lambda}$. The goal of this appendix is to relate the objects $\tau_\Lambda M$ and $\tau_{\GP}M$ of $\stabfgmod{\Lambda}$ when $M$ is Gorenstein projective. Indeed, we will show that these objects eventually coincide after repeated application of the syzygy functor. While this fact may not be surprising to experts, and can be deduced quickly from results already in the literature (most easily from \cite[Thm.~3.7]{A1}), we give here a very direct proof, which even exhibits a natural isomorphism of functors. Moreover, we explain how this result provides a new perspective on results of Garcia Elsener (Corollary~\ref{teo m tipo a} of the present paper) and Garcia Elsener--Schiffler \cite[Thm.~1]{GES} characterising Gorenstein projective modules over certain Calabi--Yau tilted algebras $\Lambda$, by relating these characterisations directly to a Calabi--Yau property of $\stabGP(\Lambda)$.

We denote by $\Omega\colon\stabfgmod{\Lambda}\to\stabfgmod{\Lambda}$ the syzygy functor, taking a module to the kernel of a projective cover, and by $\Sigma\colon\stabfgmod{\Lambda}\to\stabfgmod{\Lambda}$ its left adjoint, taking a module to the cokernel of a right $\proj{\Lambda}$-approximation. The restrictions of these functors to $\stabGP(\Lambda)$ are mutually inverse, the restriction of $\Sigma$ being the suspension functor on this triangulated category.

\begin{lem}
\label{sigma-to-GP}
The functor $\Sigma^d\colon\stabfgmod{\Lambda}\to\stabfgmod{\Lambda}$ has essential image in $\stabGP(\Lambda)$.
\end{lem}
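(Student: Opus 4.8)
The plan is to use the adjunction $\Sigma\dashv\Omega$ to reduce everything to the elementary facts recorded in Remark~\ref{rema gorenstein}: a $d$th syzygy is Gorenstein projective, and a module of finite projective dimension has projective dimension at most $d$. The trick I would use is to fix $M$, set $X:=\Sigma^{d}M$, and prove that $X$ is a direct summand (in $\stabfgmod\Lambda$) of a Gorenstein projective module; since $\stabGP(\Lambda)$ is closed under direct summands, this gives $X\in\stabGP(\Lambda)$, which is exactly the assertion.

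Concretely, I would start from a Gorenstein projective approximation of $X$, that is a short exact sequence $0\to Y\to G\xrightarrow{g}X\to0$ with $G\in\GP(\Lambda)$ and $\projdim Y<\infty$; such a sequence exists because $\GP(\Lambda)$ is (contravariantly) finite, as used in the introduction of this appendix, and Remark~\ref{rema gorenstein} then forces $\projdim Y\le d$. It suffices to show that $g$ is a split epimorphism in $\stabfgmod\Lambda$, i.e. that $\mathrm{id}_{X}$ lifts along $g_{*}\colon\underline{\Hom}(X,G)\to\underline{\Hom}(X,X)$.

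Here the key step is to transport the problem across the adjunction. Because $X=\Sigma^{d}M$, the natural isomorphism $\underline{\Hom}(\Sigma^{d}M,-)\cong\underline{\Hom}(M,\Omega^{d}(-))$ identifies $g_{*}$ with $(\Omega^{d}g)_{*}\colon\underline{\Hom}(M,\Omega^{d}G)\to\underline{\Hom}(M,\Omega^{d}X)$, so it is enough to prove that $\Omega^{d}g$ is an isomorphism in $\stabfgmod\Lambda$. Since $\projdim Y\le d$, the module $\Omega^{d}Y$ is projective, hence zero in $\stabfgmod\Lambda$; feeding the approximation sequence into the (left-)triangulated structure of $\stabfgmod\Lambda$ and applying the loop functor $\Omega$ a total of $d$ times then yields a triangle in which the term coming from $Y$ has vanished, forcing $\Omega^{d}g\colon\Omega^{d}G\to\Omega^{d}X$ to be an isomorphism. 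Consequently $(\Omega^{d}g)_{*}$, and therefore $g_{*}$, is surjective, $\mathrm{id}_{X}$ lifts, and $g$ splits.

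The main obstacle, and the one point I would take care over, is the last paragraph: one must know both that $\GP(\Lambda)$ furnishes approximations with a finite projective dimension kernel (Auslander--Buchweitz theory, available since $\GP(\Lambda)$ is functorially finite and $\Lambda$ is Iwanaga--Gorenstein) and that the syzygy operator really annihilates this kernel, i.e. that $\Omega^{d}g$ is invertible. The cleanest way I know to justify the latter is to use that $\stabfgmod\Lambda$ is left triangulated with loop functor $\Omega$, so that the short exact sequence becomes a triangle and $\Omega^{d}$, being exact for this structure, turns the vanishing $\Omega^{d}Y\cong0$ into the desired isomorphism; alternatively one can argue in the singularity category $\mathrm{D}_{\mathrm{sg}}(\Lambda)\simeq\stabGP(\Lambda)$, where $g$ is an isomorphism because its cone $Y[1]$ has finite projective dimension.
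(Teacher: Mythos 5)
Your overall architecture is sound: reducing the lemma to the splitting of a Gorenstein projective approximation $0 \to Y \to G \xrightarrow{g} X \to 0$ of $X=\Sigma^d M$, and transporting the lifting problem across the adjunction $\Sigma^d \dashv \Omega^d$ so that everything hinges on ``$\Omega^d g$ is a stable isomorphism'', is correct. The gap is in your primary justification of that hinge. In a \emph{left} triangulated category, a triangle $\Omega C \to A \to B \xrightarrow{f} C$ with $A \cong 0$ does \emph{not} force $f$ to be an isomorphism: the long exact $\stabHom$-sequences attached to a left triangle extend only in the direction of $\Omega$, so nothing controls the object sitting at the cone end. A concrete counterexample inside exactly the categories at hand: let $\Lambda$ be hereditary of type $\Aa_2$, so $\Lambda$ is $1$-Iwanaga--Gorenstein and $\GP(\Lambda)=\proj\Lambda$. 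The short exact sequence $0 \to P_2 \to P_1 \to S_1 \to 0$ (a Gorenstein projective approximation of $S_1$, with kernel of finite projective dimension) induces the left triangle $\Omega S_1 \to P_2 \to P_1 \to S_1$ in $\stabfgmod\Lambda$, which stably reads $0 \to 0 \to 0 \to S_1$: the term coming from $Y$ has vanished, yet the map $0 \to S_1$ is certainly not an isomorphism since $S_1 \not\cong 0$ stably. So your conclusion about $\Omega^d g$ is true, but not for the reason you give. Your fallback via $\mathrm{D}_{\mathrm{sg}}(\Lambda)$ is also incomplete as stated: that $g$ becomes an isomorphism in $\mathrm{D}_{\mathrm{sg}}(\Lambda)$ says only that $X$ and $G$ agree there, which holds for instance whenever $X$ has finite projective dimension without $X$ being projective; to descend to $\stabfgmod\Lambda$ you must apply Buchweitz's equivalence $\stabGP(\Lambda)\simeq\mathrm{D}_{\mathrm{sg}}(\Lambda)$ and the identification of $\Omega^d$ with $[-d]$ to the map $\Omega^d g$ between the Gorenstein projective modules $\Omega^d G$ and $\Omega^d X$, none of which appears in your text.

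The step can be repaired concretely: iterating the horseshoe lemma on the approximation sequence yields an exact sequence $0 \to \Omega^d Y \oplus Q_1 \to \Omega^d G \oplus Q_2 \to \Omega^d X \oplus Q_3 \to 0$ with all $Q_i$ projective, whose right-hand map agrees stably with $\Omega^d g$. The left term is projective because $\projdim Y \le d$, and $\Omega^d X$ is a $d$-th syzygy, hence Gorenstein projective by Remark~\ref{rema gorenstein}; therefore $\Ext^1_\Lambda(\Omega^d X \oplus Q_3, \Omega^d Y\oplus Q_1)=0$ and the sequence splits, so $\Omega^d g$ is a stable split epimorphism between stably isomorphic objects, hence an isomorphism by Krull--Schmidt. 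With this patch your proof works, but note how much heavier it is than the paper's: the paper takes no approximation at all, observing instead that $\Omega^d\Sigma^d M$ is Gorenstein projective (it is a $d$-th syzygy), that $\Sigma$ preserves Gorenstein projectivity, and that the triangular identities for the adjunction $(\Sigma^d,\Omega^d)$ exhibit $\Sigma^d M$ as a stable retract of $\Sigma^d\Omega^d\Sigma^d M$. Both arguments finish by recognizing $\Sigma^d M$ as a retract of a Gorenstein projective object, but the paper's choice of that object makes the retraction a formal consequence of the adjunction, with nothing left to verify.
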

\begin{proof}
Since $\Lambda$ is $d$-Iwanaga--Gorenstein, we have $\Omega^d\Sigma^dM\in\stabGP(\Lambda)$ for any $\Lambda$-module $M$. Then $\Sigma^d\Omega^d\Sigma^dM\in\stabGP(\Lambda)$ since $\Sigma$ preserves Gorenstein projectivity. It follows from the triangular identities for the unit and counit of the adjoint pair $(\Sigma^d,\Omega^d)$ that $\Sigma^dM$ is a summand of $\Sigma^d\Omega^d\Sigma^dM$, and hence is itself Gorenstein projective.
\end{proof}

\begin{thm}
\label{mainthm}
Let $\Lambda$ be a finite-dimensional Iwanaga--Gorenstein algebra of Gorenstein dimension at most $d$. Then there is a natural isomorphism
\[\Omega^d\tau_\Lambda\simeq\Omega^d\tau_{\GP}\]
of endofunctors of $\stabGP(\Lambda)$.
\end{thm}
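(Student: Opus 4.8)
The plan is to produce the natural isomorphism $\Omega^d\tau_\Lambda\simeq\Omega^d\tau_{\GP}$ by comparing the two Auslander--Reiten translations through the standard formulas expressing each as a transpose-dual construction, and then reconciling the discrepancy using the adjunction $(\Sigma^d,\Omega^d)$ together with Lemma~\ref{sigma-to-GP}. The key observation is that both $\tau_\Lambda$ and $\tau_{\GP}$ can be described via minimal projective presentations: for a Gorenstein projective module $M$, the AR translate $\tau_\Lambda M$ is $D\,\Coker(\nu p_1)$ for a minimal projective presentation $P_1\xrightarrow{p_1}P_0\to M\to 0$, whereas $\tau_{\GP}M$ arises from the \emph{same} presentation but using the relative version appropriate to the Frobenius-type exact structure on $\GP(\Lambda)$. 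The two translates thus differ only by the extent to which the cokernel/kernel data escapes $\GP(\Lambda)$, and applying $\Omega^d$ is precisely what kills that difference.

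First I would fix a Gorenstein projective module $M$ and recall that since $\stabGP(\Lambda)$ is triangulated with suspension $\Sigma$ (the restriction of the left adjoint of $\Omega$), its AR translation satisfies the relation $\tau_{\GP}\simeq\Omega^2\,[\text{something}]$; concretely, on a triangulated category with a Serre-type structure the AR translate is built from $\Omega$ and the Nakayama functor. The cleaner route is to use that $\Sigma^d$ lands in $\stabGP(\Lambda)$ by Lemma~\ref{sigma-to-GP}, so that $\Omega^d\Sigma^d\simeq\id$ on $\stabGP(\Lambda)$ and $\Sigma^d\Omega^d\simeq\id$ there as well (the restrictions of $\Omega$ and $\Sigma$ being mutually inverse). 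This means that on $\stabGP(\Lambda)$ the functor $\Omega^d$ is an auto-equivalence, so it suffices to produce any natural transformation $\Omega^d\tau_\Lambda\to\Omega^d\tau_{\GP}$ and check it is invertible objectwise.

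The heart of the argument is to build the comparison natural transformation from the universal property of the AR sequence in $\GP(\Lambda)$. For $M\in\GP(\Lambda)$, the AR sequence $0\to\tau_{\GP}M\to E\to M\to 0$ in $\GP(\Lambda)$ is in particular a (non-split, almost-split) short exact sequence of $\Lambda$-modules, hence represents a nonzero element of $\Ext^1_\Lambda(M,\tau_{\GP}M)$. Comparing with the AR sequence $0\to\tau_\Lambda M\to E'\to M\to 0$ in $\fgmod\Lambda$ and using that right-almost-split maps are unique up to the relevant radical, I would extract a canonical morphism $\tau_\Lambda M\to\tau_{\GP}M$ in $\stabfgmod\Lambda$ (functorial in $M$, since AR sequences are functorial up to isomorphism and the comparison is forced by minimality). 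The cone of this morphism is supported on modules of finite projective dimension at most $d$, because the difference between the relative and absolute AR theory is controlled by the finitistic dimension, which equals $d$ here. Applying $\Omega^d$ then annihilates the cone, yielding the desired natural isomorphism $\Omega^d\tau_\Lambda\simeq\Omega^d\tau_{\GP}$.

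\textbf{Main obstacle.} The step I expect to be hardest is making the comparison map \emph{natural}, rather than merely defined objectwise, and simultaneously identifying its cone precisely with finite-projective-dimension data so that $\Omega^d$ kills it. The objectwise existence of $\tau_\Lambda M\to\tau_{\GP}M$ is routine from almost-split theory, but promoting it to an isomorphism of functors requires tracking the minimal projective presentations coherently, ideally by writing both translations as $D\,\Tr$-type composites and checking that the Nakayama functor intertwines them modulo projectives. I would try to finesse this by exhibiting the transformation through the unit/counit of $(\Sigma^d,\Omega^d)$ directly, since those are manifestly natural, and only afterwards verifying it agrees with the almost-split comparison on each object.
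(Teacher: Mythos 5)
Your guiding intuition---that $\tau_\Lambda$ and $\tau_{\GP}$ differ only by ``finite projective dimension'' data that $\Omega^d$ annihilates---is sound, but the two steps carrying all the weight are precisely the ones you assert rather than prove, so the proposal has genuine gaps. First, naturality: almost-split sequences determine $\tau_{\GP}M$ and $\tau_\Lambda M$ only up to \emph{non-canonical} isomorphism, so ``AR sequences are functorial up to isomorphism'' does not yield a natural transformation; a comparison map chosen objectwise by minimality need not commute with morphisms $M\to M'$. You flag this yourself as the main obstacle, and your proposed fix---exhibit the transformation through the unit/counit of $(\Sigma^d,\Omega^d)$---is the right instinct, but carrying it out requires already knowing that $\tau_{\GP}$ is naturally isomorphic to $\Sigma^d\Omega^d\tau_\Lambda$ (equivalently, matching the two Serre functors), which is essentially the theorem being proved. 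Second, the claim that the ``cone'' of $\tau_\Lambda M\to\tau_{\GP}M$ is supported in projective dimension at most $d$ ``because the difference between relative and absolute AR theory is controlled by the finitistic dimension'' is an assertion, not an argument: nothing in the proposal shows that the kernel and cokernel of the comparison map have finite projective dimension (note also that $\stabfgmod{\Lambda}$ is not triangulated, so ``cone'' must anyway be replaced by a module-level kernel/cokernel together with a horseshoe-type argument that $\Omega^d$ then identifies the syzygies). That statement is true, but establishing it is the content of results such as \cite[Thm.~3.7]{A1}, so as written the proposal is circular exactly at its key point.

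The paper's proof avoids both difficulties simultaneously and is quite different in structure: by the Yoneda embedding it suffices to compare the representable functors $\stabHom_\Lambda(-,\Omega^d\tau_{\GP}X)$ and $\stabHom_\Lambda(-,\Omega^d\tau_\Lambda X)$ on $\stabfgmod{\Lambda}$; the adjunction $(\Sigma^d,\Omega^d)$ together with the Auslander--Reiten formula---applied in $\GP(\Lambda)$ for the first functor (this is where Lemma~\ref{sigma-to-GP} enters, guaranteeing that $\Sigma^d$ lands in $\stabGP(\Lambda)$) and in $\fgmod{\Lambda}$ for the second---identifies both with $\kdual\Ext^1_\Lambda(X,\Sigma^d-)$. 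Naturality is automatic because every step is a natural isomorphism of functors, and no objectwise choice of almost-split sequences is ever made. If you wish to salvage your route, you would need to prove directly that the minimal left $\GP(\Lambda)$-approximation of $\tau_\Lambda M$ has kernel and cokernel of finite projective dimension and then still resolve the naturality issue---at which point the Serre-duality/Yoneda argument is both shorter and stronger.
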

\begin{proof}
Using the Yoneda embedding, it suffices to show that there is a natural isomorphism
\[\stabHom_\Lambda(-,\Omega^d\tau_{\GP}X)\iso\stabHom_\Lambda(-,\Omega^d\tau_\Lambda X)\]
of functors on $\stabfgmod{\Lambda}$, for any $X\in\GP(\Lambda)$.
By adjunction and the Auslander--Reiten formula for $\GP(\Lambda)$, we obtain natural isomorphisms
\[\stabHom_\Lambda(-,\Omega^d\tau_{\GP}X)\cong\stabHom_\Lambda(\Sigma^d-,\tau_{\GP}X)\cong\kdual\Ext^1_\Lambda(X,\Sigma^d-),\]
The validity of the second isomorphism depends on Lemma~\ref{sigma-to-GP}, showing that $\Sigma^d$ takes values in $\stabGP(\Lambda)$. Alternatively, using the Auslander--Reiten formula in $\fgmod{\Lambda}$, we get
\[\stabHom_\Lambda(-,\Omega^d\tau_\Lambda X)\cong\stabHom_\Lambda(\Sigma^d-,\tau_\Lambda X)\cong\kdual\Ext^1_\Lambda(X,\Sigma^d-).\]
The two right-hand sides coincide, completing the proof.
\end{proof}

\begin{thm}[{\cite[Thm.~1]{GES}}]
\label{ge1}
Let $\Lambda$ be a $2$-Calabi--Yau tilted algebra. Then a $\Lambda$-module $M$ is Gorenstein projective if and only if $M\iso\Omega^2\tau_\Lambda M$ in $\stabfgmod{\Lambda}$.
\end{thm}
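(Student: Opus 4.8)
The plan is to deduce this directly from Theorem~\ref{mainthm} together with the Keller--Reiten description of the singularity category. First I would record the numerical data: by Theorem~\ref{prop KR - teo ralf}(1) a $2$-Calabi--Yau tilted algebra $\Lambda$ is $1$-Iwanaga--Gorenstein, so the relevant Gorenstein dimension is $d=1$, and $\stabGP(\Lambda)$ is a $3$-Calabi--Yau triangulated category whose suspension is $\Sigma=\Omega^{-1}$. Specializing Theorem~\ref{mainthm} to $d=1$ gives a natural isomorphism $\Omega\tau_\Lambda\iso\Omega\tau_{\GP}$ of endofunctors of $\stabGP(\Lambda)$; whiskering on the left by the functor $\Omega$ yields $\Omega^2\tau_\Lambda\iso\Omega^2\tau_{\GP}$.

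The key step is to identify $\Omega^2\tau_{\GP}$ with the identity. In a $\Hom$-finite triangulated $k$-category with Serre functor $\mathbb{S}$, the Auslander--Reiten translation satisfies $\tau_{\GP}=\mathbb{S}\circ[-1]$. The $3$-Calabi--Yau property says $\mathbb{S}=[3]=\Sigma^3=\Omega^{-3}$, so $\tau_{\GP}=\Omega^{-3}\circ\Omega=\Omega^{-2}$, whence $\Omega^2\tau_{\GP}\iso\id$ on $\stabGP(\Lambda)$. Combining this with the previous paragraph gives $\Omega^2\tau_\Lambda M\iso M$ in $\stabGP(\Lambda)$ for every $M\in\GP(\Lambda)$; since $\stabGP(\Lambda)$ is a full subcategory of $\stabfgmod{\Lambda}$, the same isomorphism holds in $\stabfgmod{\Lambda}$, which is the forward implication.

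For the converse I would argue directly, without invoking the Calabi--Yau structure. Over a $1$-Iwanaga--Gorenstein algebra every first syzygy is Gorenstein projective by Remark~\ref{rema gorenstein}, so for an arbitrary module $M$ the module $\Omega^2\tau_\Lambda M=\Omega(\Omega\tau_\Lambda M)$ is automatically in $\GP(\Lambda)$. Assuming $M\iso\Omega^2\tau_\Lambda M$ in $\stabfgmod{\Lambda}$, we get $M\oplus P\cong(\Omega^2\tau_\Lambda M)\oplus Q$ as $\Lambda$-modules for suitable projectives $P,Q$. The right-hand side is Gorenstein projective, and since $\GP(\Lambda)$ is closed under direct summands, $M$ is Gorenstein projective.

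With Theorem~\ref{mainthm} in hand, the substantive difficulty has already been resolved, and the only real step is the computation in the key paragraph: translating the abstract $3$-Calabi--Yau structure of $\stabGP(\Lambda)$ into the concrete identity $\Omega^2\tau_{\GP}\iso\id$ via the relation $\tau_{\GP}=\mathbb{S}\circ[-1]$ and the stable identification $[1]=\Sigma=\Omega^{-1}$. The main point requiring care is the bookkeeping, in both directions, between isomorphisms in $\stabGP(\Lambda)$ and in $\stabfgmod{\Lambda}$; this is harmless here precisely because $\stabGP(\Lambda)$ is full in $\stabfgmod{\Lambda}$ and because $\Omega^2\tau_\Lambda M$ is a second syzygy, hence Gorenstein projective, for every module $M$.
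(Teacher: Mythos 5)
Your proposal is correct and follows essentially the same route as the paper's own proof: the forward direction combines Theorem~\ref{mainthm} with the Keller--Reiten $3$-Calabi--Yau property of $\stabGP(\Lambda)$, identifying $\tau_{\GP}\simeq\Sigma^2$ via the Serre functor (your $\tau_{\GP}=\mathbb{S}\circ[-1]$ is the same fact the paper phrases as ``$\Sigma\tau_{\GP}$ is a Serre functor, and Serre functors are unique''), and the converse uses that $\Omega^2\tau_\Lambda M$ is automatically Gorenstein projective over a $1$-Iwanaga--Gorenstein algebra. The only difference is expository: you spell out the summand argument for the ``if'' direction, which the paper compresses into ``follows immediately.''
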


\begin{proof}
By a result of Keller and Reiten \cite[Prop.~2.1]{KR}, a $2$-Calabi--Yau tilted algebra is Iwanaga--Gorenstein of Gorenstein dimension at most $1$. Thus $\Omega^2 N\in\stabGP(\Lambda)$ for any $N\in\fgmod{\Lambda}$, and the `if' part of the theorem follows immediately.

For the `only if' direction, Keller and Reiten also prove \cite[Thm.~3.3]{KR} that $\stabGP(\Lambda)$ is a $3$-Calabi--Yau category, meaning that $\Sigma^3$ is a Serre functor. On the other hand, the Auslander--Reiten formula states that $\Sigma\tau_{\GP}$ is always a Serre functor on $\stabGP(\Lambda)$. Since Serre functors are unique up to natural isomorphism, the $3$-Calabi--Yau property is equivalent to there being a natural isomorphism $\tau_{\GP}\simeq\Sigma^2$. Now by Theorem~\ref{mainthm}, using again the Iwanaga--Gorenstein property of $\Lambda$, we have
\[\Omega^2\tau_\Lambda M\cong\Omega^2\tau_{\GP}M\cong\Omega^2\Sigma^2 M\cong M.\qedhere\]
\end{proof}

We close by observing that characterisations of Gorenstein projective $\Lambda$-modules as in Theorem~\ref{ge1} are closely related to Calabi--Yau properties of $\stabGP(\Lambda)$ in wider generality.

\begin{prop}
\label{char-to-CY}
Let $\Lambda$ be Iwanaga--Gorenstein of Gorenstein dimension at most $m$. Then $\stabGP(\Lambda)$ is $(m+1)$-Calabi--Yau if and only if there is a natural isomorphism $\Omega^m\tau_\Lambda\simeq\id_{\stabGP(\Lambda)}$. In this case, a $\Lambda$-module $M$ is Gorenstein projective if and only if $M\cong\Omega^m\tau_\Lambda M$ in $\stabfgmod{\Lambda}$.
\end{prop}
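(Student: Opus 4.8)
The plan is to reformulate the $(m+1)$-Calabi--Yau condition purely in the language of Serre functors, and then to trade the Auslander--Reiten translation $\tau_{\GP}$ of $\stabGP(\Lambda)$ for the module-category translation $\tau_\Lambda$ using Theorem~\ref{mainthm}. Recall that on the triangulated category $\stabGP(\Lambda)$ the suspension is $\Sigma$, with inverse $\Omega$, so by the definition of Calabi--Yau dimension given in Section~\ref{sect-3} the category is $(m+1)$-Calabi--Yau precisely when $\Sigma^{m+1}$ is a Serre functor. On the other hand, the Auslander--Reiten formula on $\stabGP(\Lambda)$ says that $\Sigma\tau_{\GP}$ is always a Serre functor. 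Since Serre functors are unique up to natural isomorphism, the first claimed equivalence reduces to the single statement that $\Sigma^{m+1}\simeq\Sigma\tau_{\GP}$ as endofunctors.

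First I would cancel one copy of $\Sigma$: because $\Sigma$ is an autoequivalence of $\stabGP(\Lambda)$ with inverse $\Omega$, precomposing $\Sigma^{m+1}\simeq\Sigma\tau_{\GP}$ with $\Omega$ yields the equivalent statement $\Sigma^m\simeq\tau_{\GP}$. Applying $\Omega^m$ and using $\Omega^m\Sigma^m\simeq\id$ on $\stabGP(\Lambda)$ rewrites this as $\Omega^m\tau_{\GP}\simeq\id_{\stabGP(\Lambda)}$. Finally I would invoke Theorem~\ref{mainthm} with $d=m$, which is legitimate since $\Lambda$ has Gorenstein dimension at most $m$; this gives the natural isomorphism $\Omega^m\tau_\Lambda\simeq\Omega^m\tau_{\GP}$ of endofunctors of $\stabGP(\Lambda)$, so $\Omega^m\tau_{\GP}\simeq\id$ holds if and only if $\Omega^m\tau_\Lambda\simeq\id_{\stabGP(\Lambda)}$ does. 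Chaining these equivalences establishes the first biconditional.

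For the module-theoretic characterisation I would work under the standing hypothesis $\Omega^m\tau_\Lambda\simeq\id_{\stabGP(\Lambda)}$. The \emph{only if} direction is immediate: a Gorenstein projective $M$ lies in $\stabGP(\Lambda)$, so $\Omega^m\tau_\Lambda M\cong M$ already in $\stabGP(\Lambda)$, and since the inclusion $\stabGP(\Lambda)\hookrightarrow\stabfgmod{\Lambda}$ is fully faithful this isomorphism persists in $\stabfgmod{\Lambda}$. For the \emph{if} direction, note that for any module $N$ the object $\Omega^m(\tau_\Lambda N)$ is an $m$-th syzygy, hence Gorenstein projective by Remark~\ref{rema gorenstein} (using Gorenstein dimension at most $m$). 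Thus if $M\cong\Omega^m\tau_\Lambda M$ in $\stabfgmod{\Lambda}$, then $M$ is stably isomorphic to a Gorenstein projective module; unwinding the stable isomorphism gives $M\oplus P\cong\Omega^m\tau_\Lambda M\oplus Q$ for projectives $P,Q$, exhibiting $M$ as a direct summand of a Gorenstein projective module, whence $M$ is itself Gorenstein projective.

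The genuine content of the argument is entirely carried by Theorem~\ref{mainthm}, so the main points requiring care are formal rather than computational: making the translation between the categorical Calabi--Yau property and the functor isomorphism $\Sigma^{m+1}\simeq\Sigma\tau_{\GP}$ precise via uniqueness of Serre functors, justifying the cancellation of $\Sigma$ as an autoequivalence, and handling the projective summands cleanly when passing between isomorphisms in $\stabGP(\Lambda)$ and in $\stabfgmod{\Lambda}$. I expect no serious obstacle beyond this bookkeeping.
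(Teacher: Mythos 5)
Your proposal is correct and follows essentially the same route as the paper's own proof: uniqueness of Serre functors plus the Auslander--Reiten formula to turn the $(m+1)$-Calabi--Yau property into $\Omega^m\tau_{\GP}\simeq\id_{\stabGP(\Lambda)}$, then Theorem~\ref{mainthm} to replace $\tau_{\GP}$ by $\tau_\Lambda$, and finally the argument of Theorem~\ref{ge1} (with $2$ replaced by $m$) for the module-theoretic characterisation. The only difference is that you spell out details the paper leaves implicit, such as cancelling $\Sigma$ as an autoequivalence and handling projective summands when passing from a stable isomorphism to genuine Gorenstein projectivity.
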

\begin{proof}
As in the proof of Theorem~\ref{mainthm}, $\stabGP(\Lambda)$ is $(m+1)$-Calabi--Yau if and only if there is a natural isomorphism $\tau_{\GP}\simeq\Sigma^m$ on $\stabGP(\Lambda)$. Since $\Sigma^m$ has quasi-inverse $\Omega^m$ on this category, this is equivalent to asking that $\Omega^m\tau_{\GP}\simeq\id_{\stabGP(\Lambda)}$. But Theorem~\ref{mainthm} tells us that $\Omega^m\tau_{\GP}\simeq\Omega^m\tau_\Lambda$ in this setting, and the first part of the statement follows. The second part is then proved as in Theorem~\ref{ge1}, replacing $2$ by $m$. 
\end{proof}

Note that the isomorphisms constructed in the proof of Theorem~\ref{ge1} arise from a natural isomorphism in this way, using the $3$-Calabi--Yau property of $\stabGP(\Lambda)$ \cite[Thm.~3.3]{KR}.
We also deduce that a similar Calabi--Yau property holds for certain gentle $m$-cluster-tilted algebras.

\begin{cor}
Let $\Lambda$ be an $m$-cluster-tilted algebra of type $\Aa$ or $\tAa$. Then $\stabGP(\Lambda)$ is $(m+1)$-Calabi--Yau.
\end{cor}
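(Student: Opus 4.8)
The plan is to deduce the corollary directly from Proposition \ref{char-to-CY}, verifying its hypotheses by means of Corollary \ref{teo m tipo a}. The first step is to record that an $m$-cluster-tilted algebra $\Lambda$ of type $\Aa$ or $\tAa$ is $m$-Iwanaga--Gorenstein, which is exactly Corollary \ref{teo m tipo a}(1); in particular the Gorenstein dimension of $\Lambda$ is at most $m$, so Proposition \ref{char-to-CY} applies with this value of $m$. By that proposition, establishing that $\stabGP(\Lambda)$ is $(m+1)$-Calabi--Yau is equivalent to producing a natural isomorphism $\Omega^m\tau_\Lambda\simeq\id$ of endofunctors of $\stabGP(\Lambda)$, which is the functorial form of the characterisation of Gorenstein projective modules established in the main body (Theorem \ref{main}, Corollary \ref{teo m tipo a}(2)). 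Thus the argument amounts to matching that module-theoretic characterisation against the intrinsic Calabi--Yau statement about the singularity category, with Proposition \ref{char-to-CY} serving as the precise dictionary between the two.

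The key step is therefore to upgrade the objectwise characterisation to a \emph{natural} isomorphism of functors on $\stabGP(\Lambda)$, and for this I would invoke Theorem \ref{mainthm}. Because $\Lambda$ has Gorenstein dimension at most $m$, that theorem provides a natural isomorphism $\Omega^m\tau_\Lambda\simeq\Omega^m\tau_{\GP}$ of endofunctors of $\stabGP(\Lambda)$, so it suffices to identify $\tau_{\GP}$ with the appropriate power of the suspension $\Sigma=\Omega^{-1}$. Concretely, the proof of Theorem \ref{main} already describes the indecomposable Gorenstein projectives as the string modules $M(u_i)$ running along the saturated $(m+2)$-cycles, with the syzygy $\Omega$ cycling these modules with period $m+2$; a parallel computation using the Nakayama functor pins down $\tau_{\GP}$ as a shift along the same cycle. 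Feeding this identification back through Theorem \ref{mainthm} yields the required natural isomorphism on $\stabGP(\Lambda)$, and Proposition \ref{char-to-CY} then delivers the Calabi--Yau conclusion.

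The main obstacle I anticipate is precisely this passage from objects to functors. The translation $\tau_\Lambda$ need not send $\stabGP(\Lambda)$ into itself, so one cannot simply cancel syzygies objectwise to read off $\Omega^m\tau_\Lambda\simeq\id$; the content lies in the naturality, which is exactly what Theorem \ref{mainthm} is engineered to provide and why the appendix is needed at all. Once that naturality is in hand, the remaining bookkeeping is formal: since $\Sigma\tau_{\GP}$ is always a Serre functor on $\stabGP(\Lambda)$ by the Auslander--Reiten formula, identifying $\tau_{\GP}$ with the relevant power of $\Sigma$ immediately exhibits a power of $\Sigma$ as a Serre functor, which is the Calabi--Yau property in the form used throughout the paper.
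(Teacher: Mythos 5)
Your reduction to Proposition~\ref{char-to-CY} via Corollary~\ref{teo m tipo a} follows the same skeleton as the paper's proof, but the step you yourself identify as the crux---upgrading the objectwise characterisation to a natural isomorphism of functors---is not actually closed by your argument. Theorem~\ref{mainthm} provides a natural isomorphism $\Omega^m\tau_\Lambda\simeq\Omega^m\tau_{\GP}$ and nothing more: it transfers naturality between the two translations, but it cannot manufacture the natural isomorphism $\Omega^m\tau_\Lambda\simeq\id_{\stabGP(\Lambda)}$ that Proposition~\ref{char-to-CY} requires as input. Your proposed substitute---a Nakayama-functor computation ``pinning down'' $\tau_{\GP}$ as a shift along the saturated cycle---produces isomorphisms $\tau_{\GP}M\cong\Sigma^{t}M$ one object at a time, and a \emph{natural} isomorphism between $\tau_{\GP}$ and a power of $\Sigma$ is, by uniqueness of Serre functors, precisely the Calabi--Yau property you are trying to prove; so as written the plan is circular at the decisive point. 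The ingredient the paper uses instead, and which your proposal never mentions, is that $\Lambda$ is gentle, so that $\stabGP(\Lambda)$ is semisimple (a product of orbit categories of the form $\Db(k)/[\ell]$, by \cite{Ka}) and, $k$ being algebraically closed, every indecomposable has endomorphism ring $k$ with no morphisms between distinct indecomposables. One then chooses an isomorphism on each indecomposable and extends additively over direct sum decompositions; naturality is automatic because every morphism is a matrix of scalars. Some rigidity input of this kind is unavoidable: in general two functors can agree on every object without being naturally isomorphic.

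Separately, watch the exponents when matching the main text against Proposition~\ref{char-to-CY}: Theorem~\ref{main} and Corollary~\ref{teo m tipo a}(2) give $\Omega^{m+1}\tau N\simeq N$, whose functorial form is $\Omega^{m+1}\tau_\Lambda\simeq\id$, not $\Omega^{m}\tau_\Lambda\simeq\id$ as your opening paragraph asserts. Applying Proposition~\ref{char-to-CY} with parameter $m+1$ (legitimate, since the Gorenstein dimension is at most $m\leq m+1$) then outputs Calabi--Yau degree $m+2$, and this is also what the objectwise picture predicts: on a block $\Db(k)/[m+2]$ coming from a saturated $(m+2)$-cycle, $\Sigma^{d}$ fixes every indecomposable only when $m+2$ divides $d$ (for $m=1$ this recovers the $3$-Calabi--Yau property of gentle cluster-tilted algebras, consistent with Keller--Reiten). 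The paper's own proof of the corollary quotes Corollary~\ref{teo m tipo a} with exponent $m$, so this off-by-one tension predates your attempt, but your write-up silently uses both exponents and a complete argument must commit to one and carry it consistently through Proposition~\ref{char-to-CY}.
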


\begin{proof}
By Corollary~\ref{teo m tipo a}, $\Lambda$ is Iwanaga--Gorenstein of Gorenstein dimension at most $m$, and $M\cong\Omega^m\tau_\Lambda M$ for each $M\in\GP(\Lambda)$. Since $\Lambda$ is a gentle algebra, meaning that $\stabGP(\Lambda)$ is semi-simple \cite{Ka}, there must be a collection of such isomorphisms that is natural in $M$---for example, by first choosing such an isomorphism for each $M$ in a set of representatives for the isoclasses of indecomposable objects, and then extending to arbitrary objects by choosing direct sum decompositions. (We use here that the ground field $k$ is algebraically closed, so that each indecomposable has endomorphism ring $k$.) The result then follows by Proposition~\ref{char-to-CY}.
\end{proof}

\textbf{Acknowledgments:} This appendix originated from discussions between Matthew Pressland and Ana Garcia Elsener after the latter's talk at the BIREP Summer School on Gentle Algebras in August 2017. We thank the organisers of the school for the inspiring environment.

{}

\bigskip{\footnotesize%

  \textsc{SK: Laboratoire de Math\'ematiques d'Orsay, Universit\'e Paris-Sud, 91405 Orsay, France} \par
  \textit{E-mail address:} \texttt{sondre.kvamme@u-psud.fr} \par
  \addvspace{\medskipamount}
  
  \textsc{MP: Institut f\"ur Algebra und Zahlentheorie, Universit\"at Stuttgart, Pfaffenwaldring 57, 70569 Stuttgart, Germany} \par
  \textit{E-mail address:} \texttt{presslmw@mathematik.uni-stuttgart.de}


\end{document}